\documentclass[11pt]{amsart}

\usepackage[a4paper,margin=1in]{geometry}
\usepackage[utf8]{inputenc}
\usepackage[T1]{fontenc}
\usepackage{lmodern}
\usepackage{microtype}
\usepackage{amsmath,amssymb,amsthm}
\usepackage[colorlinks=true,linkcolor=blue,citecolor=blue,urlcolor=blue]{hyperref}

\newtheorem{theorem}{Theorem}[section]
\newtheorem{proposition}[theorem]{Proposition}
\newtheorem{corollary}[theorem]{Corollary}
\newtheorem{lemma}[theorem]{Lemma}
\theoremstyle{definition}
\newtheorem{definition}[theorem]{Definition}
\newtheorem{example}[theorem]{Example}
\theoremstyle{remark}
\newtheorem{remark}[theorem]{Remark}

\newcommand{\Spec}{\operatorname{Spec}}
\newcommand{\Reg}{\operatorname{Reg}}
\newcommand{\Jac}{\operatorname{Jac}}
\newcommand{\Max}{\operatorname{Max}}

\title[Strongly multiplicative sets]{Strongly multiplicative sets, idempotent localizations, and $S$-prime phenomena}

\author{Hwankoo Kim}
\address{Division of Computer Engineering, Hoseo University, Asan, Republic of Korea}
\email{hkkim@hoseo.edu}

\author{Suat Ko\c{c}}
\address{Department of Mathematics, Marmara University, Istanbul, Turkiye}
\email{suat.koc@marmara.edu.tr}

\subjclass[2020]{Primary 13A15; Secondary 13B30, 13E05, 13G05}
\keywords{strongly multiplicative set, localization, arbitrary intersections, idempotent localization, $S$-prime ideal, strongly prime ideal, torsion theory, almost multiplicative set, $m$-complement operator}

\begin{document}

\begin{abstract}
A multiplicative set $S$ of a commutative ring $R$ is strongly multiplicative if every family $(s_i)_{i \in I}$ of elements of $S$ admits a common multiple in $S \cap \bigcap_{i \in I} s_iR$.
We combine the structural results on strongly multiplicative sets with their prime-theoretic and module-theoretic applications.
We prove that $S$ is strongly multiplicative if and only if localization at $S$ commutes with arbitrary intersections of ideals, if and only if $R_S$ is the localization at an idempotent, and if and only if $D(S)$ is clopen in $\Spec(R)$.
We then develop permanence results under homomorphisms, products, factor rings, trivial extensions, and amalgamations; describe the associated split torsion theory; show that almost multiplicative sets contribute no new cases beyond their multiplicative hull; and record a Mittag--Leffler refinement for intersections of submodules in finitely generated modules.
On the prime-theoretic side, we relate strongly multiplicative sets to strongly prime ideals and strongly zero-dimensional rings, answer the Hamed--Malek question on the role of strong multiplicativity for chains of $S$-prime ideals, prove a strong Krull separation lemma together with a maximal-ideal correspondence for $R_S$, and connect the theory with the regular $m$-complement operator.
In particular, every nontrivial strongly multiplicative localization must invert a zero divisor.
\end{abstract}

\maketitle

\section{Introduction}

Let $R$ be a commutative ring with $1 \neq 0$, and let $S \subseteq R$ be a multiplicative set, so $0 \notin S$, $1 \in S$, and $ss' \in S$ for all $s,s' \in S$.
Localization at $S$ is one of the basic constructions of commutative algebra, but it does not preserve every infinite ideal-theoretic operation.
For example, if $R=\mathbb{Z}$, $S=\mathbb{Z}\setminus\{0\}$, and $I_n=n!\mathbb{Z}$, then
\[
\mathbb{Q}\Bigl(\bigcap_{n \geq 1} I_n\Bigr)=0
\qquad\text{while}\qquad
\bigcap_{n \geq 1}\mathbb{Q}I_n=\mathbb{Q}.
\]
Thus localization need not commute with arbitrary intersections of ideals.

In \cite{Hamed}, the notion of a strongly multiplicative set was introduced and applied to the study of chains of $S$-prime ideals and minimal $S$-prime ideals.
The main structural point of the present paper is that strong multiplicativity is equivalent to a rigid Boolean phenomenon: localization at $S$ comes from a central idempotent.
Equivalently, the spectral subset
\[
D(S)=\{\mathfrak p \in \Spec(R): \mathfrak p \cap S=\varnothing\}
\]
is clopen.
Once this classification is in place, many auxiliary statements become simpler and more transparent.

The paper also incorporates several prime-theoretic applications that were developed from a different point of view.
In particular, we connect strongly multiplicative sets with strongly prime ideals and strongly zero-dimensional rings, revisit the open question of Hamed and Malek on the necessity of strong multiplicativity for the behavior of chains of $S$-prime ideals \cite{Hamed}, prove a strong form of Krull's separation lemma, and obtain a clean maximal-ideal correspondence for strongly multiplicative localizations.
On the module-theoretic side, we record the associated split torsion theory, the almost multiplicative variant, and a refinement based on the Mittag--Leffler property.
Finally, we relate the theory to the regular $m$-complement operator of Anderson and Chang.

Section~2 contains the structural classification and several permanence results.
Section~3 treats colon ideals, strongly prime ideals, strongly zero-dimensional rings, $\pi$-regular rings and von Neumann regular rings.
Section~4 gives the torsion-theoretic and module-theoretic refinements.
Section~5 studies the open-question examples and $S$-minimal primes.
Section~6 proves the strong Krull separation theorem and the maximal correspondence theorem.
Section~7 treats the regular $m$-complement operator and the total quotient ring case.

\section{Structure and permanence}

We begin with the basic definition and its elementary reformulation.

\begin{definition}\label{definition}
Let $R$ be a commutative ring with $1\neq 0$, and let $S\subseteq R$ be a multiplicative set; that is, $0\notin S$, $1\in S$, and $st\in S$ for all $s,t\in S$. Then:
\begin{enumerate}
\item We say that $S$ is a \textit{strongly multiplicative set} if
$\left(\bigcap_{i\in\Delta} s_iR\right)\cap S\neq\emptyset$
for every family $(s_i)_{i\in\Delta}$ in $S$ \cite{Hamed}.

\item We say that $S$ satisfies the \textit{maximal multiple condition} if there exists $t\in S$ such that $s\mid t$ for every $s\in S$; equivalently, $Rt\subseteq Rs$ for every $s\in S$ \cite{Anderson}. In this case, we say that $t$ is a \emph{least} element of $S$ with respect to inclusion of principal ideals.
\end{enumerate}
\end{definition}

Every multiplicative set $S$ of $R$ satisfies
$\left(\bigcap_{i\in\Delta} s_iR\right)\cap S\neq\emptyset$
whenever $\Delta$ is finite. This property need not hold for infinite index sets. For example, let $R=\mathbb{Z}[X]$ and let $S=\{X^n\mid n\in\mathbb{N}\cup\{0\}\}$. Then $S$ is a multiplicative set, but
$\bigcap_{n\in\mathbb{N}} X^n\mathbb{Z}[X]=0,$
and hence $S$ is not strongly multiplicative.

We now give some examples of strongly multiplicative sets. In fact, each of the following examples satisfies the maximal multiple condition.

\begin{example}\label{ex1}
Let $R$ be a ring. Then:
\begin{enumerate}
\item Every multiplicative set $S\subseteq U(R)$ is strongly multiplicative.

\item Every finite multiplicative set $S$ of $R$ is strongly multiplicative.

\item Let $R$ be a Boolean ring with an ascending chain of nonzero principal ideals
\[
Ra_1\subsetneq Ra_2\subsetneq \cdots \subsetneq Ra_n\subsetneq\cdots.
\]
Set $S=\{a_n\mid n\in\mathbb{N}\}\cup\{1\}$. Since $R$ is Boolean, each element is idempotent. If $i\le j$, then $Ra_i\subseteq Ra_j$, so $a_i=ra_j$ for some $r\in R$, and hence
\[
a_ia_j=ra_j^2=ra_j=a_i.
\]
Thus $S$ is a multiplicative set. Moreover, $a_1\in\bigcap_{n\in\mathbb{N}} Ra_n$, so $(\bigcap_{n\in\mathbb{N}} Ra_n)\cap S\neq\emptyset$. Therefore $S$ is strongly multiplicative.

\item Let $R=(P(\mathbb{N}),\triangle,\cap)$ be the Boolean ring of all subsets of $\mathbb{N}$. For each $n\in\mathbb{N}$, set $A_n=\{1,2,\ldots,n\}$ and let
\[
S=\{A_n\mid n\in\mathbb{N}\}\cup\{\mathbb{N}\}.
\]
Then $A_iA_j=A_i\cap A_j=A_{\min\{i,j\}}$, so $S$ is a multiplicative set. Also,
\[
RA_1\subsetneq RA_2\subsetneq \cdots \subsetneq RA_n\subsetneq\cdots,
\]
and $A_1\in\bigcap_{n\in\mathbb{N}} RA_n$. Hence $(\bigcap_{n\in\mathbb{N}} RA_n)\cap S\neq\emptyset$, so $S$ is strongly multiplicative.
\end{enumerate}
\end{example}

In recent studies on $S$-versions of algebraic structures, some authors assume that $S$ is a strongly multiplicative set in order to prove basic properties of the corresponding notions, while others assume that $S$ satisfies the maximal multiple condition (see, for example, \cite{Baek, Hamed}, etc.). Our first result shows that these two conditions are equivalent.

\begin{proposition}\label{prop:minimal}
Let $S \subseteq R$ be a multiplicative set. Then $S$ is strongly multiplicative if and only if $S$ satisfies the maximal multiple condition.
\end{proposition}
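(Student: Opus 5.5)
Both directions come straight from the definitions; the only choice to make is which family of elements of $S$ to test the definition against.

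For the forward implication, assume $S$ is strongly multiplicative and apply the definition to the family indexed by $S$ itself, namely $(s)_{s\in S}$. The definition gives an element
\[
t\in\Bigl(\bigcap_{s\in S} sR\Bigr)\cap S .
\]
Then $t\in sR$ for every $s\in S$, so $s\mid t$ for all $s\in S$, equivalently $Rt\subseteq Rs$. Hence $t$ is a least element of $S$ in the sense of Definition~\ref{definition}(2), and $S$ satisfies the maximal multiple condition. The paper does not restrict the size of the index set $\Delta$, so taking $\Delta=S$ is a legitimate family.

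For the converse, assume $S$ satisfies the maximal multiple condition, and fix $t\in S$ with $s\mid t$ for every $s\in S$. Let $(s_i)_{i\in\Delta}$ be an arbitrary family in $S$. Each $s_i$ lies in $S$, so $s_i\mid t$, that is, $t\in s_iR$ for every $i$. Therefore
\[
t\in\Bigl(\bigcap_{i\in\Delta}s_iR\Bigr)\cap S,
\]
so this intersection is nonempty. If $\Delta=\varnothing$, the intersection is read as $R$ and $t$ still witnesses nonemptiness. Hence $S$ is strongly multiplicative.

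I do not expect a genuine obstacle. The one step needing care is in the forward direction: we must take the family to be all of $S$, and not some countable or finite subfamily, so that the witness $t$ is a common multiple of every element of $S$ simultaneously.
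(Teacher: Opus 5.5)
Your proof is correct and follows the paper's argument exactly: for the forward direction you apply the definition to the family of all elements of $S$ to obtain a common multiple $t\in S$, and for the converse you note that the least element $t$ lies in every $s_iR$. Your remark about the empty index set is a harmless extra that the paper omits.
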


\begin{proof}
Assume that $S$ is strongly multiplicative. Applying the definition to the family consisting of all elements of $S$, we obtain $\left( \bigcap_{s \in S} sR \right) \cap S \neq \emptyset.$
Choose $t \in \left( \bigcap_{s \in S} sR \right) \cap S$. Then $t \in sR$ for every $s \in S$, and hence $s \mid t$ for every $s \in S$. Thus $S$ satisfies the maximal multiple condition.

Conversely, suppose that there exists $t \in S$ such that $s \mid t$ for every $s \in S$. Let $(s_i)_{i \in I}$ be any family of elements of $S$. Since $s_i \mid t$ for every $i \in I$, we have $t \in s_iR$ for every $i \in I$. Therefore,
$t \in \bigcap_{i \in I} s_iR.$
As $t \in S$, it follows that $\left( \bigcap_{i \in I} s_iR \right) \cap S \neq \emptyset.$
Hence $S$ is strongly multiplicative.
\end{proof}

\begin{remark}
If $t, t' \in S$ both satisfy the maximal multiple condition, then $t \mid t'$ and $t' \mid t$. Consequently, $Rt = Rt'$. In this sense, such an element is unique up to generating the same principal ideal.
\end{remark}

We now record a basic property of strongly multiplicative sets that will be used repeatedly in the sequel.

\begin{lemma}\label{lem:idempotent}
Let $S$ be a strongly multiplicative set with least element $t$.
Then there exists an idempotent $e \in R$ such that $tR=eR$.
More precisely, one can write $t=ue$ for a unit $u \in R$.
\end{lemma}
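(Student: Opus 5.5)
Since $t\in S$ and $S$ is multiplicative, $t^2\in S$. The maximal multiple condition (Proposition~\ref{prop:minimal}) gives $t^2\mid t$, so there is $r\in R$ with $t=rt^2$. This single relation is the key input; everything else is a standard computation showing that a principal ideal generated by an element $t$ with $t\in t^2R$ is generated by an idempotent and that $t$ is a unit multiple of that idempotent.

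Put $e=rt$. Then $e^2=r^2t^2=r(rt^2)=rt=e$, so $e$ is idempotent. Clearly $eR\subseteq tR$. Conversely $t=rt^2=et$, so $t\in eR$; hence $tR=eR$. This settles the first assertion.

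For the refinement $t=ue$ with $u$ a unit, the natural candidate is
\[
u=t+(1-e).
\]
Then $ue=te+(1-e)e=te=t$, using $t=et$ from above. To see that $u$ is a unit, set $v=re+(1-e)$. Compute $uv=tre+t(1-e)+(1-e)re+(1-e)^2$. Here $tre=e\cdot e=e$ (since $tr=e$), $t(1-e)=t-te=0$, $(1-e)re=0$, and $(1-e)^2=1-e$. Hence $uv=e+(1-e)=1$, and $u$ is a unit. Equivalently, one can note that $u$ maps to the unit $t$ in $Re$ (with inverse $re$) and to $1$ in $R(1-e)$, and use $R\cong Re\times R(1-e)$.

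There is no real obstacle. The only point requiring care is choosing the unit $u$ correctly. The naive guess $u=t$ fails because $t$ need not be a unit. One has to patch $t$ on the complementary idempotent $1-e$, and verify $t(1-e)=0$ from $t=et$.
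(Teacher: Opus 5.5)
Your proposal is correct and follows the paper's proof essentially verbatim: from $t=rt^2$ you set $e=rt$, check $e^2=e$ and $tR=eR$, and take $u=t+(1-e)$ with inverse $re+(1-e)$, exactly as the paper does. Your extra remark via $R\cong Re\times R(1-e)$ is a valid alternative check that $u$ is a unit.
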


\begin{proof}
Since $t^2 \in S$ and $t$ is least, we have $t \in t^2R$, say $t=t^2a$ for some $a \in R$.
Set $e=ta$.
Then
\[
e^2=t^2a^2=(t^2a)a=ta=e,
\]
so $e$ is idempotent.
Also $e=ta \in tR$, while $t=t^2a=te \in eR$, hence $tR=eR$.

Now define $u=t+1-e$.
Because $t=te$, we have
\[
u(ae+1-e)=(t+1-e)(ae+1-e)=tae+1-e=e+1-e=1,
\]
and similarly $(ae+1-e)u=1$.
Thus $u$ is a unit.
Since $et=t$ and $e(1-e)=0$, we obtain
\[
ue=e(t+1-e)=et=t.
\]
Hence $t=ue$ with $u \in U(R)$.
\end{proof}

The next theorem is the basic classification theorem.
It merges the intersection property, the least-element condition, the idempotent description of the localization, and the spectral picture.

\begin{theorem}\label{thm:main}
Let $S \subseteq R$ be a multiplicative set.
The following are equivalent.
\begin{enumerate}
\item $S$ is strongly multiplicative.
\item For every family of ideals $(I_\lambda)_{\lambda \in \Lambda}$ of $R$,
\[
\Bigl(\bigcap_{\lambda \in \Lambda} I_\lambda\Bigr)_S
=
\bigcap_{\lambda \in \Lambda} (I_\lambda)_S
\qquad\text{inside } R_S.
\]
\item There exists an idempotent $e \in R$ such that $R_S \cong Re$ as rings.
\item There exists an idempotent $e \in R$ such that $D(S)=D(e)$.
Equivalently, $D(S)$ is clopen in $\Spec(R)$.
\end{enumerate}
Moreover, if $t$ is a least element of $S$ and $t=ue$ is as in Lemma~\ref{lem:idempotent}, then
\[
R_S \cong R_t \cong R_e \cong Re.
\]
\end{theorem}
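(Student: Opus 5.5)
The plan is to prove the ``moreover'' clause first, since it does most of the work for (1)$\Rightarrow$(2),(3),(4). Then I would close the cycle with (2)$\Rightarrow$(1) and (4)$\Rightarrow$(1), routing (3) through (4). Throughout, I read ``$R_S\cong Re$'' in (3) as an isomorphism of $R$-algebras, i.e.\ compatible with the canonical maps from $R$. An abstract ring isomorphism carries no information about $S$, so this is the only reading under which (3)$\Rightarrow$(4) can hold.

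\emph{Moreover clause and (1)$\Rightarrow$(2),(3),(4).} Assume $S$ is strongly multiplicative, with least element $t$ from Proposition~\ref{prop:minimal} and $t=ue$ as in Lemma~\ref{lem:idempotent}.

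First, $R_S\cong R_t$. Since $t\in S$, the element $t$ is inverted in $R_S$. Conversely, every $s\in S$ divides $t$, so $s$ becomes a unit in $R_t$. By the universal property the two localizations coincide.

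Next, $R_t=R_{ue}=R_e$ because $u$ is a unit. Finally $R_e\cong Re$ via $x/e^n\mapsto xe$, with inverse $xe\mapsto xe/1$; the kernel of $R\to R_e$ is $(1-e)R$.

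This gives (3). For (4), a prime $\mathfrak p$ misses $S$ if and only if it misses $t$, because $s\mid t$ for all $s\in S$ and $t\in S$. Hence $D(S)=D(t)=D(e)$, and $D(e)=V(1-e)$ is clopen.

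For (2), transport ideals along $R_S\cong Re$, under which $I_S$ corresponds to $eI$. It then suffices to check $e\bigcap_\lambda I_\lambda=\bigcap_\lambda eI_\lambda$. The inclusion ``$\subseteq$'' is clear. For ``$\supseteq$'', if $x\in\bigcap_\lambda eI_\lambda$ then $x=ex\in I_\lambda$ for every $\lambda$, so $x=ex\in e\bigcap_\lambda I_\lambda$.

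\emph{(2)$\Rightarrow$(1).} Apply (2) to the family $(sR)_{s\in S}$. Each $(sR)_S=R_S$, so $\bigl(\bigcap_{s\in S}sR\bigr)_S=R_S$. For any ideal $J$ we have $J_S=R_S$ exactly when $J\cap S\neq\varnothing$. Hence $\bigl(\bigcap_{s\in S} sR\bigr)\cap S\neq\varnothing$, and by Proposition~\ref{prop:minimal} $S$ is strongly multiplicative.

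\emph{(3)$\Rightarrow$(4).} Under the $R$-algebra isomorphism, $\Spec(R_S)$ and $\Spec(Re)$ have the same image in $\Spec(R)$. These images are $D(S)$ and $V(1-e)=D(e)$ respectively.

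\emph{(4)$\Rightarrow$(1).} I expect this to be the main step. Suppose $D(S)=D(e)$; note that a clopen subset of $\Spec(R)$ has the form $D(e)$ for an idempotent $e$. Two facts are needed.

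\begin{itemize}
\item[(a)] Each $s\in S$ divides $e$. Since $D(e)\subseteq D(s)$, the element $s$ lies in no prime of $R_e$, so $s$ is a unit in $R_e$. Thus $e^n\in sR$ for some $n$, and $e^n=e$ gives $e\in sR$.
\item[(b)] Some multiple of $e$ lies in $S$. Since $D(S)\subseteq D(e)$, the element $e$ avoids every prime disjoint from $S$. The complement of the union of such primes is the saturation of $S$, so $e$ lies in it, and $re=s_0\in S$ for some $r\in R$.
\end{itemize}

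Combining (a) and (b), every $s\in S$ divides $e$ and hence divides $s_0$. So $s_0$ is a least element of $S$, and Proposition~\ref{prop:minimal} finishes the argument. The care needed here is only in invoking the description of the saturation of $S$ correctly; the rest is bookkeeping.
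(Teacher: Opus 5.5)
Your proof is correct. For the ``moreover'' clause, (1)$\Rightarrow$(3), (2)$\Rightarrow$(1) and (3)$\Rightarrow$(4) it matches the paper; the real difference is how you close the cycle.

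The paper proves (4)$\Rightarrow$(3) by a universal-property argument. From $D(e)\subseteq D(s)$ each $s\in S$ is a unit of $R_e$, and from $D(S)\subseteq D(e)$ the element $e/1$ is a unit of $R_S$, so $R_S\cong R_e$. It then returns to (1) only through (3)$\Rightarrow$(2)$\Rightarrow$(1). You instead turn the same two unit statements into divisibility in $R$: $s\mid e$ for all $s\in S$, and $e\mid s_0$ for some $s_0\in S$. The second uses the description of the saturation as the complement of the union of primes missing $S$. This proves (4)$\Rightarrow$(1) directly and exhibits an explicit least element $s_0=re$. Your route is more elementary and self-contained; the paper's is shorter and stays at the level of localizations.

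Two smaller differences:
\begin{itemize}
\item Your (1)$\Rightarrow$(2) is the paper's (3)$\Rightarrow$(2) transport argument. The paper also gives the direct computation $a/b=ta/tb$.
\item Your (1)$\Rightarrow$(4), via $D(S)=D(t)=D(e)$, bypasses (3).
\end{itemize}

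Reading (3) as an isomorphism of $R$-algebras is not mere caution, even though the statement says ``as rings''. The paper's own arguments for (3)$\Rightarrow$(2) (``extension of $I$ to $Re$ is $Ie$'') and for (3)$\Rightarrow$(4) tacitly use compatibility with the structure maps. For a bare ring isomorphism the implication is false. Take $R=\mathbb{Z}\times\mathbb{Q}^{\mathbb{N}}$, $S=(\mathbb{Z}\setminus\{0\})\times\{1\}$ and $e=(0,1)$. Then $R_S\cong\mathbb{Q}\times\mathbb{Q}^{\mathbb{N}}\cong\mathbb{Q}^{\mathbb{N}}\cong Re$ as rings, yet $S$ has no least element and $D(S)$ is not clopen.
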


\begin{proof}
(1) $\Rightarrow$ (2).
Let $t \in S$ be a least element.
The inclusion
\[
\Bigl(\bigcap_{\lambda \in \Lambda} I_\lambda\Bigr)_S
\subseteq
\bigcap_{\lambda \in \Lambda} (I_\lambda)_S
\qquad\text{inside } R_S.
\]
always holds.
Conversely, let $a/b$ belong to the right-hand side.
For each $\lambda$ there exists $s_\lambda \in S$ such that $s_\lambda a \in I_\lambda$.
Since $s_\lambda \mid t$, we also have $ta \in I_\lambda$ for every $\lambda$.
Thus $ta \in \bigcap_\lambda I_\lambda$, and therefore
\[
\frac{a}{b}=\frac{ta}{tb}\in
\Bigl(\bigcap_{\lambda \in \Lambda} I_\lambda\Bigr)_S.
\]

(2) $\Rightarrow$ (1).
Apply (2) to the family of principal ideals $(s_iR)_{i \in I}$ arising from a family $(s_i)_{i \in I}$ in $S$.
Since $1 \in (s_iR)_S$ for each $i$, we get
\[
1 \in \bigcap_{i \in I} (s_iR)_S=\Bigl(\bigcap_{\lambda \in \Lambda} I_\lambda\Bigr)_S.
\]
Hence there exists $t \in S$ with $t \in \bigcap_{i \in I} s_iR$, which proves that $S$ is strongly multiplicative.

(1) $\Rightarrow$ (3).
Let $t \in S$ be a least element.
Because every element of $S$ divides $t$, every element of $S$ becomes invertible in $R_t$; thus the universal property of localization yields $R_S \cong R_t$.
By Lemma~\ref{lem:idempotent}, we may write $t=ue$ with $e^2=e$ and $u$ a unit.
Since $u$ is invertible, $R_t \cong R_e$.
Finally, if $e$ is idempotent, the map
\[
R_e \longrightarrow Re,\qquad \frac{r}{e^n}\longmapsto re
\]
is a ring isomorphism.
Hence $R_S \cong Re$.

(3) $\Rightarrow$ (2).
Assume $R_S \cong Re$ for an idempotent $e$.
Extension of an ideal $I \subseteq R$ to $Re$ is simply $Ie$.
Therefore, for any family $(I_\lambda)$,
$\Bigl(\bigcap_{\lambda} I_\lambda\Bigr)e
\subseteq
\bigcap_{\lambda} I_\lambda e.$
For the reverse inclusion, let $x$ belong to the right-hand side.
Then $x \in I_\lambda e \subseteq I_\lambda$ for every $\lambda$, so $x \in \bigcap_\lambda I_\lambda$.
Since also $x=xe$, we conclude that
$x \in \Bigl(\bigcap_{\lambda} I_\lambda\Bigr)e.$
Hence
$\Bigl(\bigcap_{\lambda} I_\lambda\Bigr)e
=
\bigcap_{\lambda} I_\lambda e,$
which is exactly (2) after identifying $R_S$ with $Re$.

(3) $\Rightarrow$ (4).
For every localization one has $\Spec(R_S)\cong D(S)$.
If $R_S \cong Re$, then $\Spec(R_S)\cong \Spec(Re)\cong D(e)$.
Thus $D(S)=D(e)$, and $D(e)$ is clopen because $V(e)=D(1-e)$.

(4) $\Rightarrow$ (3).
Assume $D(S)=D(e)$ for an idempotent $e$.
For each $s \in S$ we have $D(S)\subseteq D(s)$, hence $D(e)\subseteq D(s)$, so $s$ becomes invertible in $R_e$.
Thus the localization map $R \to R_e$ factors uniquely through $R_S$.
Conversely, $D(S)=D(e)$ implies that $e$ becomes invertible in $R_S$, so the localization map $R \to R_S$ factors uniquely through $R_e$.
The two induced maps are inverse to one another by uniqueness.
Hence $R_S \cong R_e \cong Re$.
\end{proof}

\begin{remark}
Theorem~\ref{thm:main}(2) shows that if $S$ is a strongly multiplicative set of $R$, then the localization map
$R \longrightarrow R_S$ is intersection flat for ideals in the sense of Hochster and Jeffries (\cite{HochsterJeffries}); that is, for every family of ideals $(I_\lambda)_{\lambda\in\Lambda}$ of $R$, one has
$\left(\bigcap_{\lambda\in\Lambda} I_\lambda\right)R_S = \bigcap_{\lambda\in\Lambda} I_\lambda R_S$ inside $R_S$. Thus, strongly multiplicative localizations form a distinguished class of ring homomorphisms for which arbitrary intersections of ideals commute with extension. Moreover, by Theorem~\ref{thm:main}(3), there exists an idempotent $e\in R$ such that $R_S \cong Re.$
Hence these localizations are precisely those arising from passage to an idempotent summand of $R$, and the intersection-flatness-for-ideals property may be viewed as a consequence of this idempotent decomposition. In general, this is weaker than full intersection flatness, which requires the analogous commutation property for arbitrary families of submodules of finitely generated modules.
\end{remark}

\begin{corollary}\label{cor:indecomp}
If $R$ is indecomposable, then every strongly multiplicative set of $R$ is contained in $U(R)$.
In particular, every strongly multiplicative set of a local ring is contained in $U(R)$.
\end{corollary}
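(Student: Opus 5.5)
The plan is to reduce everything to Lemma~\ref{lem:idempotent}. Let $S$ be a strongly multiplicative set of an indecomposable ring $R$. By Proposition~\ref{prop:minimal}, $S$ has a least element $t$. By Lemma~\ref{lem:idempotent}, we can write $t=ue$ with $u\in U(R)$ and $e^2=e$. Indecomposability means that the only idempotents of $R$ are $0$ and $1$, so it remains to rule out $e=0$.

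If $e=0$, then $t=0$, which contradicts $0\notin S$. Hence $e=1$, and $t=u$ is a unit. Now take any $s\in S$. Since $s\mid t$, we can write $t=sr$ for some $r\in R$, so $s(ru^{-1})=1$ and $s$ is a unit. Therefore $S\subseteq U(R)$.

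For the second assertion, it suffices to check that a local ring is indecomposable. If $e$ is an idempotent of a local ring, then $e$ and $1-e$ cannot both lie in the maximal ideal, since their sum is $1$. Whichever of them is a unit, call it $f$, satisfies $f(1-f)=0$, so $1-f=0$ and $e\in\{0,1\}$. Alternatively, one can argue via Theorem~\ref{thm:main}(4): $D(S)$ is clopen and contains the primes disjoint from $S$, and $\Spec(R)$ is connected. The elementwise argument above is more direct, so I would use that.

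None of these steps is a real obstacle. The only points needing care are excluding $e=0$ (which uses $0\notin S$) and the divisibility step that transfers invertibility from $t$ to every element of $S$.
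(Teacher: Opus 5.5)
Your proof is correct, and it takes a more elementary route than the paper's. The paper applies Theorem~\ref{thm:main} to get $R_S\cong Re$, uses indecomposability to force $e=1$, and concludes from $R_S\cong R$ that every element of $S$ is a unit. It then simply cites the fact that local rings are indecomposable. You never pass through the localization. Instead you take the least element $t=ue$ from Lemma~\ref{lem:idempotent}, force $e=1$, and transfer invertibility from $t$ to each $s\in S$ via $s\mid t$.

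Your route makes two points explicit that the paper leaves implicit:
\begin{enumerate}
\item You exclude $e=0$ directly from $0\notin S$. In the paper this step corresponds to $R_S\neq 0$.
\item Your final step is immediate. The paper's inference from ``$R_S\cong R$'' to ``$S\subseteq U(R)$'' is valid only because the isomorphism of Theorem~\ref{thm:main} is compatible with the canonical map $R\to R_S$. An abstract ring isomorphism would not suffice: for $R=\prod_{n\ge 0}k$ and $S=\{1,(0,1,1,\dots)\}$ one has $R_S\cong R$, yet $S\not\subseteq U(R)$.
\end{enumerate}
You also supply the short proof that local rings are indecomposable, which the paper takes as known.

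The paper's version, in return, conveys the conceptual content: a strongly multiplicative localization is a projection onto an idempotent direct factor, and an indecomposable ring has no proper such factor. For the local case, Proposition~\ref{prop:jac} gives a third, independent argument via $S\cap\Jac(R)=\varnothing$.
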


\begin{proof}
If $S$ is strongly multiplicative, Theorem~\ref{thm:main} gives $R_S \cong Re$ for an idempotent $e$.
Since $R$ is indecomposable, $e=1$.
Thus $R_S \cong R$, and this happens exactly when every element of $S$ is a unit.
The final assertion follows because every local ring is indecomposable.
\end{proof}

\begin{example}[A domain does not provide new examples]\label{ex:domain}
Let $R=\mathbb{Z}$ and $S=\mathbb{Z}\setminus\{0\}$.
Then $R_S=\mathbb{Q}$, and the example from the introduction shows that localization at $S$ does not commute with arbitrary intersections of ideals.
Hence $S$ is not strongly multiplicative.
Equivalently, $D(S)$ is not clopen in $\Spec(\mathbb{Z})$.
\end{example}

\begin{example}[Finite products]\label{ex:product}
Let $R=R_1 \times R_2$ and let $e=(1,0)$.
Then $Re \cong R_1$ and $D(e)=\Spec(R_1) \subseteq \Spec(R_1) \sqcup \Spec(R_2)=\Spec(R)$.
Therefore every multiplicative set $S$ with $D(S)=D(e)$ is strongly multiplicative, and the associated localization is just the projection $R_1 \times R_2 \to R_1$.
This is the model example for a nontrivial strongly multiplicative localization.
\end{example}

The least-element description makes several permanence properties immediate.

\begin{proposition}\label{prop:hom}
Let $f \colon A \to B$ be a ring homomorphism.
\begin{enumerate}
\item If $S$ is a strongly multiplicative set of $A$, $0 \notin f(S)$, and $1 \in f(S)$, then $f(S)$ is a strongly multiplicative set of $B$.
\item If $S$ is a strongly multiplicative set of $A$, $f$ is surjective, and $0 \notin f(S)$, then $f(S)$ is a strongly multiplicative set of $B$.
\end{enumerate}
\end{proposition}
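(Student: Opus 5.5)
The plan is to use the least-element characterization, Proposition~\ref{prop:minimal}, throughout. This way we never handle arbitrary families directly. We only need to check two things: that $f(S)$ is a multiplicative set of $B$, and that $f(t)$ is a least element of $f(S)$ whenever $t$ is a least element of $S$.

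For (1), first verify that $f(S)$ is multiplicative. It is closed under products because $f(s)f(s')=f(ss')$ with $ss'\in S$. It contains $1$ and avoids $0$ by hypothesis. The hypothesis $1\in f(S)$ is needed because $f$ need not be unital in the authors' convention, or $B$ could have $f(1_A)\neq 1_B$. If ring homomorphisms are unital it is automatic, but we simply use it as given.

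Next, let $t\in S$ be a least element. For any $f(s)\in f(S)$ we have $t=sa$ for some $a\in A$, so $f(t)=f(s)f(a)$, hence $f(s)\mid f(t)$ in $B$. Since $f(t)\in f(S)$, the set $f(S)$ satisfies the maximal multiple condition. Proposition~\ref{prop:minimal} then gives that $f(S)$ is strongly multiplicative.

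For (2), surjectivity gives $f(1_A)=1_B$. Indeed, for any $b=f(a)$ we have $f(1_A)b=f(1_A a)=b$, so $f(1_A)$ is an identity of $B$. Hence $1_B=f(1_A)\in f(S)$, and we are reduced to (1).

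No step presents a real obstacle. The only point needing care is confirming that $f(S)$ is genuinely a multiplicative set in the paper's sense, namely that it contains $1$ and omits $0$. That is exactly what the hypotheses supply, with surjectivity providing $1\in f(S)$ in part (2).
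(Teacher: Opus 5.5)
Your proof is correct and follows the paper's argument. Both take a least element $t$ of $S$, push each divisibility $s \mid t$ through $f$ to conclude that $f(t)$ is a least element of $f(S)$, and then obtain (2) from (1) because surjectivity gives $1=f(1)\in f(S)$. Your explicit check that $f(S)$ is closed under products, and your argument that a surjective $f$ must send $1_A$ to $1_B$, fill in details the paper leaves implicit.
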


\begin{proof}
Let $t \in S$ be a least element.
For any $s \in S$ we have $s \mid t$, say $t=as$.
Applying $f$ gives $f(t)=f(a)f(s)$, so every element of $f(S)$ divides $f(t)$.
Thus $f(t)$ is a least element of $f(S)$, provided that $0 \notin f(S)$ and $1 \in f(S)$.
This proves (1), and (2) follows because surjectivity implies $1=f(1)\in f(S)$.
\end{proof}

\begin{corollary}\label{cor:factor}
Let $I$ be an ideal of $R$, and let $S$ be a strongly multiplicative set such that $I \cap S=\varnothing$.
Then
\[
\overline S=\{s+I : s \in S\}
\]
is a strongly multiplicative set of $R/I$.
\end{corollary}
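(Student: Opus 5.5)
This is a direct application of Proposition~\ref{prop:hom}(2) to the canonical projection $\pi\colon R\to R/I$, $r\mapsto r+I$. The map $\pi$ is a surjective ring homomorphism, and $\pi(S)=\{s+I : s\in S\}=\overline S$. So once the hypothesis $0\notin\pi(S)$ is checked, Proposition~\ref{prop:hom}(2) gives that $\overline S$ is a strongly multiplicative set of $R/I$.

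The only step to verify is $0\notin\overline S$. Suppose $s+I=0+I$ for some $s\in S$. Then $s\in I$, so $s\in I\cap S$, which contradicts $I\cap S=\varnothing$. Hence $0\notin\overline S$. The remaining conditions needed for $\overline S$ to be a multiplicative set are automatic. First, $1+I\in\overline S$ because $1\in S$. Second, $\overline S$ is closed under products because $\pi$ is multiplicative.

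If one prefers to avoid citing Proposition~\ref{prop:hom}, the argument can be given directly. Let $t$ be a least element of $S$, which exists by Proposition~\ref{prop:minimal}. For each $s\in S$ write $t=as$; reducing modulo $I$ gives $t+I=(a+I)(s+I)$. Thus $t+I$ is a least element of $\overline S$, and Proposition~\ref{prop:minimal} applied in $R/I$ gives the conclusion.

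There is no real obstacle here. The only point needing care is that $\overline S$ avoids $0$, and this is exactly what the hypothesis $I\cap S=\varnothing$ guarantees.
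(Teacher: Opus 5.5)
Your proposal is correct and matches the paper's proof: apply Proposition~\ref{prop:hom} to the surjection $R\to R/I$, using the hypothesis $I\cap S=\varnothing$ exactly to ensure $0\notin\overline S$. Your alternative direct argument via the least element $t+I$ just unpacks the proof of Proposition~\ref{prop:hom}, and it is also fine.
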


\begin{proof}
Apply Proposition~\ref{prop:hom} to the quotient map $R \to R/I$.
The condition $I \cap S=\varnothing$ guarantees that $0 \notin \overline S$.
\end{proof}

\begin{theorem}\label{thm:product}
Let $R_i$ be a ring and $S_i$ a multiplicative set of $R_i$ for $i=1,2$.
Set $R=R_1 \times R_2$ and $S=S_1 \times S_2$.
Then $S$ is strongly multiplicative if and only if both $S_1$ and $S_2$ are strongly multiplicative.
\end{theorem}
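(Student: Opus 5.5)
The plan is to work entirely with the least-element characterization of Proposition~\ref{prop:minimal}, together with the fact that divisibility in $R=R_1\times R_2$ is computed componentwise. First I will check that $S=S_1\times S_2$ is a multiplicative set of $R$. It contains $(1,1)$ and is closed under products. It avoids $0=(0,0)$ because $0\notin S_1$ (indeed already $0\notin S_1$ alone suffices). The key observation is that for $(a_1,a_2),(b_1,b_2)\in R$ we have $(a_1,a_2)\mid(b_1,b_2)$ in $R$ if and only if $a_1\mid b_1$ in $R_1$ and $a_2\mid b_2$ in $R_2$. This holds since $(b_1,b_2)=(r_1,r_2)(a_1,a_2)$ is exactly the pair of equations $b_i=r_ia_i$.

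For the direction ($\Leftarrow$), I will choose least elements $t_1\in S_1$ and $t_2\in S_2$ and set $t=(t_1,t_2)\in S$. Any $(s_1,s_2)\in S$ satisfies $s_i\mid t_i$ for $i=1,2$. By the componentwise criterion, $(s_1,s_2)\mid t$. Hence $t$ is a least element of $S$, and Proposition~\ref{prop:minimal} gives that $S$ is strongly multiplicative.

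For the direction ($\Rightarrow$), let $t=(t_1,t_2)$ be a least element of $S$. Given $s_1\in S_1$, the element $(s_1,1)$ lies in $S$ since $1\in S_2$. Therefore $(s_1,1)\mid t$, and so $s_1\mid t_1$. Moreover $t_1\in S_1$. Thus $t_1$ is a least element of $S_1$, and symmetrically $t_2$ is a least element of $S_2$. Alternatively, this direction follows from Proposition~\ref{prop:hom}(2) applied to the surjective projections $R\to R_i$: the image of $S$ is $S_i$, which does not contain $0$.

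There is no serious obstacle here. The only point requiring care is that the projection argument needs $S_2\neq\varnothing$ so that $\pi_1(S)=S_1$, and this is guaranteed because $1\in S_2$.
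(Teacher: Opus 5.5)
Your proposal is correct and matches the paper's proof: the direction ($\Leftarrow$) via the componentwise least element $(t_1,t_2)$ is identical, and for ($\Rightarrow$) the paper uses exactly your alternative argument (Proposition~\ref{prop:hom} applied to the surjective projections $R\to R_i$). Your direct $(s_1,1)$ argument is simply an unpacked version of that same divisibility transfer.
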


\begin{proof}
Assume first that $S$ is strongly multiplicative.
The coordinate projections $R \to R_i$ are surjective and send $S$ onto $S_i$, so Proposition~\ref{prop:hom} shows that each $S_i$ is strongly multiplicative.

Conversely, assume that each $S_i$ is strongly multiplicative, and choose least elements $t_i \in S_i$.
Then $t=(t_1,t_2)$ belongs to $S$.
For any $s=(s_1,s_2)\in S$, each $s_i$ divides $t_i$, so $s$ divides $t$.
Hence $t$ is a least element of $S$.
\end{proof}

\begin{remark}
A strongly multiplicative set of a product ring need not be a Cartesian product of strongly multiplicative sets. Indeed, let $0\neq e\in R_1$ be an idempotent and set
\[
S=\{(e,0),(e,1),(1,1)\}\subseteq R_1\times R_2.
\]
Then $S$ is a finite multiplicative set, and hence it is strongly multiplicative by Example~\ref{ex1}. If $e=1$, then
\[
S=\{1\}\times \{0,1\},
\]
but $\{0,1\}$ is not a multiplicative set of $R_2$, and therefore it is not strongly multiplicative. If $e\neq 1$, then the projections of $S$ onto the two factors are $\{e,1\}$ and $\{0,1\}$, respectively. Hence, if $S=S_1\times S_2$ for some subsets $S_1\subseteq R_1$ and $S_2\subseteq R_2$, then necessarily $S_1=\{e,1\}$ and $S_2=\{0,1\}$, so $S$ would have four elements, a contradiction. Therefore, $S$ cannot be written as $S_1\times S_2$ for any multiplicative sets $S_1$ of $R_1$ and $S_2$ of $R_2$.
\end{remark}

\begin{proposition}\label{prop:locperm}
Let $S$ and $T$ be multiplicative sets of $R$ with $0 \notin ST=\{st : s \in S,\ t \in T\}$.
\begin{enumerate}
\item If $S$ and $T$ are strongly multiplicative, then $ST$ is strongly multiplicative.
\item If $T$ is strongly multiplicative, then $T_S:=\{t/s : s \in S,\ t \in T\}$ is a strongly multiplicative set of $R_S$.
\end{enumerate}
\end{proposition}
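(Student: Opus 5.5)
Both parts reduce to exhibiting a least element, via Proposition~\ref{prop:minimal}. Throughout I take $T_S$ to mean the image of $T$ in $R_S$ with denominators from $S$, as written. It will be a multiplicative set of $R_S$ once we know $0 \notin T_S$, which follows from $0\notin ST$. Indeed, if $t/s=0$ in $R_S$, then $s't=0$ for some $s'\in S$, contradicting $0\notin ST$. Also $1=1/1\in T_S$, and $(t/s)(t'/s')=tt'/ss'$ shows closure under products. Similarly, $ST$ is a multiplicative set of $R$: it contains $1=1\cdot 1$, it is closed under products since $(st)(s't')=(ss')(tt')$, and $0\notin ST$ by hypothesis.

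For (1), I will choose least elements $a\in S$ and $b\in T$ and show that $ab$ is a least element of $ST$. It lies in $ST$. For an arbitrary $st\in ST$, write $a=xs$ and $b=yt$; then $ab=xy\,st$, so $st\mid ab$. By Proposition~\ref{prop:minimal}, $ST$ is strongly multiplicative. This is the same argument as in Theorem~\ref{thm:product}.

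For (2), I take a least element $b\in T$ and claim that $b/1\in T_S$ is a least element of $T_S$. Given $t/s\in T_S$, write $b=yt$ in $R$. Then
\[
\frac{b}{1}=\frac{ys\cdot t}{s}=\frac{ys}{1}\cdot\frac{t}{s}
\]
in $R_S$, so $t/s$ divides $b/1$ in $R_S$. Proposition~\ref{prop:minimal} then finishes the proof. Alternatively, I could apply Proposition~\ref{prop:hom}(1) to the localization map $R\to R_S$ to see that the image of $T$ has least element $b/1$. Every $t/s$ is an associate of $t/1$, because $1/s$ is a unit, so divisibility transfers to $T_S$.

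There is no serious obstacle. The only point needing care is the verification that $T_S$ does not contain $0$, which is exactly where the hypothesis $0\notin ST$ is used.
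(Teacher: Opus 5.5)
Your proof is correct and takes the same approach as the paper: in both parts you exhibit a least element ($ab$ for $ST$, and $b/1$ for $T_S$) and conclude via Proposition~\ref{prop:minimal}. Your explicit check that $ST$ and $T_S$ are multiplicative sets, in particular that $0\notin T_S$ follows from $0\notin ST$, fills in a detail the paper leaves implicit.
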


\begin{proof}
If $s^\ast$ and $t^\ast$ are least elements of $S$ and $T$, then $s^\ast t^\ast$ is a least element of $ST$.
This proves (1).

For (2), let $t$ be a least element of $T$.
Given $z/s \in T_S$, the divisibility $z \mid t$ in $R$ yields $z/s \mid t/1$ in $R_S$.
Hence $t/1$ is a least element of $T_S$.
\end{proof}

The following example shows that the converse of Proposition~\ref{prop:locperm}(2) does not hold in general.

\begin{example}
Let $R=k[X]$ be the polynomial ring over a field $k$, and let
\[
S=T=\{X^n \mid n\in \mathbb{N}\cup\{0\}\}.
\]
Since $\bigcap_{n\in\mathbb{N}} X^nR=0,$ it follows that $T$ is not a strongly multiplicative set of $R$. On the other hand, $R_S=k[X,X^{-1}]$ is the Laurent polynomial ring, and
\[
T_S=\{X^n \mid n\in\mathbb{Z}\}
\]
consists entirely of units in $R_S$. Therefore, by Example~\ref{ex1}, $T_S$ is a strongly multiplicative set of $R_S$.
\end{example}

Let $R$ be a ring, let $I$ and $J$ be ideals of $R$, and let $S$ be a multiplicative set of $R$. Then it is easy to see that
\[
(I:J)_S\subseteq (I_S:J_S).
\]
Moreover, if $J$ is finitely generated, then
\[
(I:J)_S=(I_S:J_S).
\]
However, if $J$ is not finitely generated, this equality need not hold in general, as the following example shows.

\begin{example}\label{examplecolon}
Let $A=\mathbb{Z}[X_1,X_2,\ldots]$ be the polynomial ring in countably many indeterminates, and let
\[
Q=(2X_1,2^2X_2,\ldots,2^nX_n,\ldots)
\]
be an ideal of $A$. Set $R=A/Q$ and
\[
S=\{2^n+Q \mid n\in \mathbb{N}\cup\{0\}\}.
\]
Then $S$ is a multiplicative set of $R$. Let
\[
I=Q/Q=0
\qquad\text{and}\qquad
J=(X_1,X_2,\ldots)/Q.
\]

We claim that $(I:J)=0$. Indeed, let $f+Q\in (I:J)=(0:J)$. Then $fX_n\in Q$ for every $n\geq 1$. If $f\notin Q$, then some monomial term of $f$ does not belong to $Q$. Choose such a term $cX_1^{a_1}\cdots X_m^{a_m}.$
Then $2^i\nmid c$ for every $i$ with $a_i>0$. Choosing $n>m$ with $2^n\nmid c$, the monomial $cX_1^{a_1}\cdots X_m^{a_m}X_n$ appears in $fX_n$ and does not belong to $Q$, a contradiction. Hence $f\in Q$, so $(I:J)=0$. Therefore,
$(I:J)_S=0.$

On the other hand, for each $n\geq 1$ we have $(2^n+Q)(X_n+Q)=0$ in $R$. Since $2^n+Q\in S$, it follows that $X_n/1=0$ in $R_S$ for every $n$. Hence $J_S=0$. Also, $I_S=0$, and therefore
\[
(I_S:J_S)=(0:0)=R_S.
\]
Consequently,
\[
(I:J)_S\neq (I_S:J_S).
\]
\end{example}

The next theorem records a useful consequence for colon ideals.

\begin{theorem}\label{thm:colon}
Let $S$ be a strongly multiplicative set of $R$, and let $I,J$ be ideals of $R$.
Then
\[
(I:J)_S=(I_S:J_S).
\]
No finite generation hypothesis on $J$ is needed.
\end{theorem}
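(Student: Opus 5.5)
The plan is to reduce the colon statement to Theorem~\ref{thm:main}(2) by writing the colon ideal as an intersection. For any ideals $I,J$ of $R$ we have
\[
(I:J)=\bigcap_{b\in J}(I:bR),
\]
and each $(I:bR)$ is a colon by a principal, hence finitely generated, ideal. By the finitely generated case recalled just before Example~\ref{examplecolon}, we get $(I:bR)_S=(I_S:(b/1)R_S)$ for every $b\in J$.

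We then apply Theorem~\ref{thm:main}(2) to the family $\bigl((I:bR)\bigr)_{b\in J}$. This gives
\[
(I:J)_S=\bigcap_{b\in J}(I:bR)_S=\bigcap_{b\in J}\bigl(I_S:(b/1)R_S\bigr).
\]
Next we identify the right-hand side with $(I_S:J_S)$. Since $J_S$ is generated as an ideal of $R_S$ by the elements $b/1$ with $b\in J$, an element $x\in R_S$ satisfies $xJ_S\subseteq I_S$ if and only if $x(b/1)\in I_S$ for all $b\in J$. Hence $(I_S:J_S)=\bigcap_{b\in J}(I_S:(b/1)R_S)$, which completes the chain of equalities.

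As an alternative, one can argue directly with a least element $t\in S$ (Proposition~\ref{prop:minimal}). Given $a/c\in(I_S:J_S)$, for each $b\in J$ there is some $s_b\in S$ with $s_b ab\in I$. Since $s_b\mid t$, this gives $tab\in I$ for all $b$, so $ta\in(I:J)$ and therefore $a/c=ta/(tc)\in(I:J)_S$. The reverse inclusion $(I:J)_S\subseteq(I_S:J_S)$ is standard.

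There is no real obstacle here. The only point needing care is the step that replaces $J$ by the principal ideals $bR$ and uses the finitely generated case: one must check that the witness $s_b$ may depend on $b$, which is exactly where strong multiplicativity enters. That dependence is absorbed by the single element $t$, or equivalently by Theorem~\ref{thm:main}(2).
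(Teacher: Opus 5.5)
Your proof is correct, and your ``alternative'' paragraph is exactly the paper's proof. The paper picks a least element $t$ and notes that every witness $s_x$ with $s_xax\in I$ divides $t$. Hence $ta\in(I:J)$, and so $a/b=ta/(tb)\in(I:J)_S$.

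Your primary argument takes a genuinely different route. You write $(I:J)=\bigcap_{b\in J}(I:bR)$ and apply the principal (finitely generated) case to each term. You then use Theorem~\ref{thm:main}(2) to pull localization through the intersection. Finally, you identify $\bigcap_{b\in J}\bigl(I_S:(b/1)R_S\bigr)$ with $(I_S:J_S)$, using that $b/s=(1/s)(b/1)$, so the $b/1$ generate $J_S$. Every step checks out, including the nonemptiness of the index set, since $0\in J$.

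What this route buys is conceptual. It shows that the colon identity is a formal consequence of two facts: flatness, which handles principal colons, and the commutation of extension with arbitrary intersections of ideals. The same argument therefore works verbatim for any flat ring map that is intersection flat for ideals, in the sense of the remark following Theorem~\ref{thm:main}.

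The cost is indirection. The route depends on Theorem~\ref{thm:main}(1)$\Rightarrow$(2), whose proof is the very same least-element trick. It also depends on the finitely generated colon formula, which the paper only asserts. The paper's direct argument is shorter and self-contained. It also makes explicit that the single element $t$ absorbs the dependence of $s_x$ on $x$.
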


\begin{proof}
The inclusion
\[
(I:J)_S\subseteq (I_S:J_S)
\]
always holds.
For the reverse inclusion, let $a/b \in (I_S:J_S)$.
Then for every $x \in J$ one has $(a/b)(x/1)\in I_S$, so there exists $s_x \in S$ with $s_xax \in I$.
Let $t$ be a least element of $S$.
Since $s_x \mid t$, it follows that $tax \in I$ for every $x \in J$.
Thus $ta \in (I:J)$, and therefore
\[
\frac{a}{b}=\frac{ta}{tb}\in (I:J)_S. \qedhere
\]
\end{proof}

Two standard constructions behave well with respect to strong multiplicativity.

\begin{theorem}\label{thm:trivial}
Let $M$ be an $R$-module, let $N$ be a submodule of $M$, and write $R \propto M$ for the trivial extension.
Then $S \propto N$ is a strongly multiplicative set of $R \propto M$ if and only if $S$ is a strongly multiplicative set of $R$.
\end{theorem}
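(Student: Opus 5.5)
The plan is to use the least-element characterization of Proposition~\ref{prop:minimal} on both sides. We first note that $S\propto N=\{(s,n): s\in S,\ n\in N\}$ is a multiplicative set of $R\propto M$ whenever $S$ is one of $R$. Indeed, $(s,n)(s',n')=(ss',sn'+s'n)$ and $sn'+s'n\in N$. It also contains $(1,0)$, and it does not contain $0$ because $0\notin S$. Recall also that multiplication in $R\propto M$ is $(a,m)(b,m')=(ab,am'+bm)$.

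For the forward direction, assume $S\propto N$ is strongly multiplicative. Apply Proposition~\ref{prop:hom}(2) to the surjective ring homomorphism $\pi\colon R\propto M\to R$, $(a,m)\mapsto a$. Its image $\pi(S\propto N)=S$ does not contain $0$, so $S$ is strongly multiplicative. This direction is immediate.

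For the converse, let $t$ be a least element of $S$. By Lemma~\ref{lem:idempotent} we may replace $t$ by an idempotent $e$ with $tR=eR$: indeed $e=ta$ with $t=te$, and the element $t$ itself lies in $S$. The candidate least element of $S\propto N$ is $(t,0)$, or better $(t^2,0)$, which lies in $S\propto N$. Given $(s,n)\in S\propto N$, we must find $(r,m)$ with
\[
(s,n)(r,m)=(sr,\,sm+rn)=(t^2,0).
\]
Write $t=sa$. A first attempt is $r=a t$, so that $sr=t^2$; then we need $sm=-atn$. Here we use $t=te$ and try $m=-a^2 t\,n$. This gives $sm=-(sa)at\,n=-at^2n$, which does not yet equal $-atn$.

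The idempotent fixes this. Write $t=t^2b$, with $e=tb$. Take $r=ab\,t^2\cdot t$, adjusting the power so that $sr$ equals the target $T=t^k$, and solve the second coordinate using the identity
\[
(s,n)(r,m)=(s,0)(r,m)+(0,n)(r,0).
\]
Since $(0,n)^2=0$, the element $(s,n)=(s,0)\bigl(1,s'n\bigr)$ whenever $s$ is invertible on the relevant piece. This suggests the cleanest route: show that $(s,n)$ divides $(e,0)$ in $R\propto M$ up to a unit, via the Lemma~\ref{lem:idempotent} style argument. Concretely, $(s,n)$ divides $(s,n)^2 c$ for a suitable $c$, which produces an idempotent. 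Alternatively, verify directly that $(s,n)\mid (t^2,0)$ by choosing $r=at$ and $m=-a^2 n$. This gives $sm+rn=-a(sa)n+atn=-atn+atn=0$ and $sr=(sa)t=t^2$. So this choice works.

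Hence $(t^2,0)$ is a least element of $S\propto N$, and Proposition~\ref{prop:minimal} finishes the proof. The only real obstacle is finding this explicit cofactor; the check above settles it.
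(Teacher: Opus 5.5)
Your proof is correct, but for the converse it takes a more elementary route than the paper. Your forward direction is the same as the paper's.

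For the converse, the paper splits into two cases. If the least element $t$ is a unit, every element of $S\propto N$ is a unit. Otherwise the paper writes $t=ue$ via Lemma~\ref{lem:idempotent} and decomposes $R\cong Re\times R(1-e)$ and $M\cong eM\times(1-e)M$. It then observes that the $Re$-component of each $s\in S$ is a unit, and builds a cofactor coordinatewise showing that $(t,0)$ is a least element.

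You avoid the idempotent and the case split entirely. From $t=sa$ you get $(s,n)(at,-a^2n)=(t^2,0)$, and your check of both coordinates is right. This is just the identity $(s,n)(s,-n)=(s^2,0)$ combined with $s^2\mid t^2$. It works verbatim for any multiplicative set with a least element and any submodule $N$.

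Nothing is lost compared with the paper. Since $t^2\in S$, we have $t=t^2c$, so $(t^2,0)(c,0)=(t,0)$; hence $(t,0)$ is also a least element, as in the paper. The paper's decomposition fits the structural picture of Theorem~\ref{thm:main}, but for this statement your computation is shorter and needs no structure theory.

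When you write it up, delete the following, none of which is used:
\begin{itemize}
\item the exploratory middle paragraph: the ``first attempt'', the choice $r=abt^2\cdot t$, and the factorization $(s,n)=(s,0)(1,s'n)$, which you never justify;
\item the appeal to Lemma~\ref{lem:idempotent}.
\end{itemize}
The closing two-line verification is the whole proof. Also avoid reusing the letter $a$ both for the element with $e=ta$ and for the cofactor with $t=sa$.
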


\begin{proof}
If $S \propto N$ is strongly multiplicative, then the projection $R \propto M \to R$ maps it onto $S$, so Proposition~\ref{prop:hom} shows that $S$ is strongly multiplicative.

Conversely, assume that $S$ is strongly multiplicative, and let $t \in S$ be least.
If $t$ is a unit of $R$, then every element of $S$ is a unit, and therefore every element of $S \propto N$ is a unit of $R \propto M$.
Hence $S \propto N$ is strongly multiplicative.

Assume now that $t$ is not a unit.
By Lemma~\ref{lem:idempotent}, $t=ue$ for a unit $u$ and a nontrivial idempotent $e$.
After identifying
\[
R \cong Re \times R(1-e)
\qquad\text{and}\qquad
M \cong eM \times (1-e)M,
\]
we may write
\[
e=(1,0),\qquad
t=(u_1,0),
\qquad
s=(s_1,s_2)
\]
for $s \in S$, with $s_1$ a unit because $s \mid t$.
Fix $(s,n)\in S \propto N$, where $n=(n_1,n_2)$.
Choose
\[
a=(u_1s_1^{-1},0)
\qquad\text{and}\qquad
m=\bigl(-s_1^{-1}a_1n_1,0\bigr),
\]
where $a_1=u_1s_1^{-1}$.
Then
\[
sa=t
\qquad\text{and}\qquad
sm+an=0.
\]
Hence $(s,n)(a,m)=(t,0),$ so every element of $S \propto N$ divides $(t,0)$.
Therefore $(t,0)$ is a least element of $S \propto N$.
\end{proof}

\begin{theorem}\label{thm:amalgam}
Let $f \colon A \to B$ be a ring homomorphism, let $J$ be an ideal of $B$, and let
\[
A \bowtie^f J=\{(a,f(a)+j): a \in A,\ j \in J\}
\]
be the amalgamated algebra along $J$ \cite{Danna}.
For a multiplicative set $S \subseteq A$, define
\[
S'=\{(s,f(s)) : s \in S\} \cup \{(1,1)\}\subseteq A \bowtie^f J.
\]
Then $S'$ is strongly multiplicative if and only if $S$ is strongly multiplicative.
\end{theorem}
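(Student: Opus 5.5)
The plan is to use Proposition~\ref{prop:minimal} throughout, so that strong multiplicativity of $S$ and of $S'$ becomes the existence of a least element. The forward direction will come from Proposition~\ref{prop:hom}. The converse will be a direct lift of a least element of $S$ along the diagonal embedding $a\mapsto (a,f(a))$. Throughout, $f$ is taken to be unital, as in the usual setting of amalgamated algebras, so that $(1,1)=(1,f(1))$ is the identity of $A\bowtie^f J$.

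First I will check that $S'$ is a multiplicative set of $A\bowtie^f J$. Each element of $S'$ has first coordinate in $S$, hence nonzero, so $0\notin S'$. Closure under products follows from $(s,f(s))(s',f(s'))=(ss',f(ss'))$. Note also that $(1,1)=(1,f(1))$, so $S'=\{(s,f(s)):s\in S\}$ exactly.

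For the implication that $S'$ strongly multiplicative forces $S$ strongly multiplicative, I will consider the projection $p\colon A\bowtie^f J\to A$, $(a,f(a)+j)\mapsto a$. This is a surjective ring homomorphism with $p(S')=S$ and $0\notin S$. Proposition~\ref{prop:hom}(2) then shows that $S$ is strongly multiplicative.

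For the converse, let $t\in S$ be a least element. Given $s\in S$, write $t=as$ with $a\in A$. Then $(a,f(a))\in A\bowtie^f J$, taking $j=0$, and
\[
(a,f(a))(s,f(s))=(as,f(as))=(t,f(t)).
\]
Hence every element of $S'$ divides $(t,f(t))\in S'$. So $(t,f(t))$ is a least element of $S'$, and $S'$ is strongly multiplicative by Proposition~\ref{prop:minimal}.

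I expect no genuine obstacle. The only point needing care is the bookkeeping that the diagonal elements $(a,f(a))$ lie in $A\bowtie^f J$ and that $(1,1)$ adds nothing new to $S'$; both use that $f$ is unital.
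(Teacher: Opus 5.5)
Your proposal is correct and follows essentially the same route as the paper. The forward direction uses the surjective first projection and Proposition~\ref{prop:hom}, and the converse lifts a least element $t$ of $S$ to $(t,f(t))$ via the diagonal elements $(a,f(a))$; your extra checks that $S'$ is a multiplicative set and that $(1,1)=(1,f(1))$ for unital $f$ are bookkeeping the paper leaves implicit.
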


\begin{proof}
If $S'$ is strongly multiplicative, then the first projection $A \bowtie^f J \to A$ maps $S'$ onto $S$, so Proposition~\ref{prop:hom} gives the result.

Conversely, let $t \in S$ be a least element.
For any $(s,f(s)) \in S'$, the divisibility $s \mid t$ in $A$ yields
\[
(t,f(t))=(s,f(s))(a,f(a))
\]
for some $a \in A$.
Thus $(t,f(t))$ is a least element of $S'$.
\end{proof}

Strongly multiplicative sets are also constrained by the Jacobson radical.

\begin{proposition}\label{prop:jac}
If $S$ is a strongly multiplicative set of $R$, then $S \cap \Jac(R)=\varnothing$. In particular, if $R$ is a local ring, then every strongly multiplicative set of $R$ is contained in $U(R)$.
\end{proposition}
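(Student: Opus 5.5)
Let $t$ be a least element of $S$, which exists by Proposition~\ref{prop:minimal}. By Lemma~\ref{lem:idempotent} we can write $t=ue$ with $u\in U(R)$ and $e$ an idempotent. The plan is to suppose that some $s\in S\cap\Jac(R)$ exists and derive a contradiction. Since $s\mid t$, we have $t\in sR\subseteq \Jac(R)$, because $\Jac(R)$ is an ideal. Hence $e=u^{-1}t\in \Jac(R)$.

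The key observation is that the only idempotent in the Jacobson radical is $0$. Indeed, $1-e$ is a unit since $e\in\Jac(R)$, and $e(1-e)=e-e^2=0$, so multiplying by $(1-e)^{-1}$ gives $e=0$. Then $t=ue=0$. But $t\in S$ and $0\notin S$, which is a contradiction. Therefore $S\cap\Jac(R)=\varnothing$.

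For the local case, $\Jac(R)$ is the unique maximal ideal $\mathfrak m$, and $R\setminus\mathfrak m=U(R)$. Since $S\cap\mathfrak m=\varnothing$, we get $S\subseteq U(R)$. This also recovers the local case of Corollary~\ref{cor:indecomp}, as it should.

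There is no real obstacle here. The only step needing care is the fact that idempotents in $\Jac(R)$ are zero, which is the short computation above. An alternative route avoids Lemma~\ref{lem:idempotent}: from $t=t^2a$ one gets $t(1-ta)=0$, and $1-ta$ is a unit when $t\in\Jac(R)$, so again $t=0$.
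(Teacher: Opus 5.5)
Your proof is correct and is essentially the paper's argument. The paper takes $x\in S\cap\Jac(R)$, applies leastness to $tx\in S$ to write $t=txa$, and concludes $t=0$ because $1-xa$ is a unit. That is your ``alternative route'' with $tx$ in place of $t^2$, and your main route via Lemma~\ref{lem:idempotent} (the idempotent $u^{-1}t$ lies in $\Jac(R)$, hence is $0$) is an equally valid form of the same cancellation.
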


\begin{proof}
Assume, for contradiction, that $x \in S \cap \Jac(R)$, and let $t \in S$ be a least element with respect to inclusion of principal ideals. Since $tx \in S$, the defining property of $t$ yields $t \in txR$. Thus there exists $a \in R$ such that
$t=txa.$
Hence, $t(1-xa)=0.$
Since $x \in \Jac(R)$, the element $1-xa$ is a unit. Therefore, $t=0$, which contradicts the fact that $0\notin S$.

Thus $S \cap \Jac(R)=\varnothing$. If $R$ is local, then $\Jac(R)=R\setminus U(R)$. Hence $S\subseteq U(R)$.
\end{proof}

\begin{example}
By Proposition~\ref{prop:jac}, every strongly multiplicative set of a ring $R$ is disjoint from $\Jac(R)$. However, this need not hold for a general multiplicative set. Let $R=\mathbb{Z}_{(p)}\cap \mathbb{Z}_{(q)}$, where $p$ and $q$ are distinct prime numbers. Then $R$ has exactly two maximal ideals, $pR$ and $qR$, and hence
\[
\Jac(R)=pR\cap qR=pqR.
\]
Set $S=(pqR\setminus\{0\})\cup\{1\}$. Then $S$ is a multiplicative set of $R$, and
\[
S\cap \Jac(R)=pqR\setminus\{0\}\neq\emptyset.
\]
Thus $S$ is not strongly multiplicative.
\end{example}

The last structural result describes the saturation of a strongly multiplicative set.

\begin{theorem}\label{thm:sat}
Let $S$ be a strongly multiplicative set of $R$, and let $\overline S$ denote its saturation.
Choose an idempotent $e \in R$ with $R_S \cong Re$.
Then
\[
\overline S=\{r \in R : re \in U(Re)\}.
\]
In particular:
\begin{enumerate}
\item if $R$ is indecomposable, then $\overline S=U(R)$;
\item if $R=R_1 \times R_2$ and, under this decomposition, $e$ equals $(1,0)$, $(0,1)$, or $(1,1)$, then $\overline S$ equals $U(R_1)\times R_2$, $R_1 \times U(R_2)$, or $U(R_1)\times U(R_2)$, respectively.
\end{enumerate}
Moreover, $\overline S$ is strongly multiplicative.
\end{theorem}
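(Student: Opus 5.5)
The plan is to reduce everything to the standard description of the saturation,
\[
\overline S=\{r\in R : r/1\in U(R_S)\},
\]
and then transport this description along the isomorphism $R_S\cong Re$ of Theorem~\ref{thm:main}.

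First I will verify the standard description. If $r\mid s$ with $s\in S$, then $r/1$ is clearly a unit. Conversely, suppose $(r/1)(a/s)=1$ in $R_S$. Then $w(ra-s)=0$ for some $w\in S$. Hence $r(wa)=ws\in S$, so $r$ divides an element of $S$ and $r\in\overline S$.

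Next I will pin down the isomorphism. Let $t$ be a least element of $S$ and write $t=ue$ as in Lemma~\ref{lem:idempotent}. The chain $R_S\cong R_t\cong R_e\cong Re$ from the proof of Theorem~\ref{thm:main} is built from universal-property maps and the map $r/e^n\mapsto re$. Hence it carries the canonical image $r/1$ to $re$. Therefore $r\in\overline S$ if and only if $re\in U(Re)$.

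The main obstacle I expect is the phrase ``choose an idempotent $e$ with $R_S\cong Re$.'' An abstract ring isomorphism need not be compatible with the canonical maps, so the statement has to be read with the canonical isomorphism. I will resolve this by showing that $e$ is intrinsic. By Theorem~\ref{thm:main}(4), $D(S)=D(e)$. Distinct idempotents $e\ne e'$ have $D(e)\ne D(e')$: otherwise $e(1-e')$ and $e'(1-e)$ would be idempotents lying in every prime, hence nilpotent, hence $0$, which forces $e=ee'=e'$. So $e$ is the unique idempotent with $D(e)=D(S)$, and the identification above does not depend on choices.

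For (1), indecomposability forces $e\in\{0,1\}$. Since $R_S\neq 0$ (because $0\notin S$), we get $e=1$, and the formula gives $\overline S=U(R)$.

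For (2), I will compute directly. Take $r=(r_1,r_2)$. When $e=(1,0)$ we have $Re=R_1\times 0$, and $re=(r_1,0)$ is a unit there if and only if $r_1\in U(R_1)$. This gives $\overline S=U(R_1)\times R_2$. The cases $e=(0,1)$ and $e=(1,1)$ are symmetric.

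Finally, to show $\overline S$ is strongly multiplicative, I will check that $e$ is a least element of $\overline S$ and then apply Proposition~\ref{prop:minimal}.
\begin{itemize}
\item $\overline S$ is a multiplicative set: it is saturated and multiplicative, and $0\notin\overline S$ since $0\cdot e=0$ is not a unit in the nonzero ring $Re$.
\item $e\in\overline S$, because $e\cdot e=e$ is the identity of $Re$.
\item Every $r\in\overline S$ divides $e$: let $ve$ be the inverse of $re$ in $Re$. Then $r(ve)=e$, so $r\mid e$.
\end{itemize}
Thus $\overline S$ satisfies the maximal multiple condition with least element $e$, and Proposition~\ref{prop:minimal} gives that $\overline S$ is strongly multiplicative.
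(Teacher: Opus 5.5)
Your proof is correct and follows the paper's route. You identify $\overline S$ with the preimage of $U(R_S)$ and transport this along $R_S\cong Re$; unlike the paper, you check that the chain $R_S\cong R_t\cong R_e\cong Re$ sends $r/1$ to $re$. You then get $e=1$ in the indecomposable case, compute the product case directly, and obtain strong multiplicativity by exhibiting $e$ as a least element of $\overline S$. That last step supplies the argument the paper's one-line ``preimage of the unit group'' remark leaves implicit.

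One caution about your ``intrinsic'' paragraph. Theorem~\ref{thm:main}(4) gives $D(e)=D(S)$ only for the idempotent produced by the canonical identification (e.g.\ from $t=ue$), not for an $e$ with a merely abstract isomorphism $R_S\cong Re$. For $R=k\times k$, $S=\{(1,1),(1,0)\}$ and $e=(0,1)$, one has $R_S\cong k\cong Re$, yet $\{r: re\in U(Re)\}=k\times U(k)\neq U(k)\times k=\overline S$. So your uniqueness argument shows that the canonical $e$ is well defined, but the canonical reading of the hypothesis is genuinely required rather than automatic.
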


\begin{proof}
For any multiplicative set, one has
\[
r \in \overline S
\quad\Longleftrightarrow\quad
r/1 \in U(R_S).
\]
Indeed, if $r \mid s$ for some $s \in S$, then $r/1$ is invertible in $R_S$.
Conversely, if $(r/1)(a/s)=1$ in $R_S$, then there exists $u \in S$ such that $u(ra-s)=0$, so $ura=us \in S$, and hence $r \in \overline S$.

After identifying $R_S$ with $Re$, the above criterion becomes
\[
r \in \overline S
\quad\Longleftrightarrow\quad
re \in U(Re).
\]
This proves the displayed formula.

If $R$ is indecomposable, Corollary~\ref{cor:indecomp} shows that $e=1$, and then the formula gives $\overline S=U(R)$.
The product descriptions are immediate from the same formula.

Finally, $\overline S$ is strongly multiplicative because it is the preimage of the unit group of $Re$ under the map $R \to Re$.
Equivalently, in the three explicit cases above, a least element is given by $1$, $(1,0)$, or $(0,1)$.
\end{proof}

\section{Intersections, strongly prime ideals, and strongly zero-dimensional rings}

We next turn to prime ideals.
Recall from \cite{Tekir} that a prime ideal $P$ is called \emph{strongly prime} if for every family of ideals $(I_\lambda)_{\lambda \in \Lambda}$ with
$\bigcap_{\lambda \in \Lambda} I_\lambda \subseteq P,$
one has $I_\mu \subseteq P$ for some index $\mu$.

\begin{theorem}\label{thm:strongprime}
Let $P$ be a prime ideal of $R$.
Then $P$ is strongly prime if and only if $R \setminus P$ is a strongly multiplicative set.
\end{theorem}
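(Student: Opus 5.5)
Set $S=R\setminus P$; it is a multiplicative set because $P$ is prime. We prove both directions directly from the definitions, using Proposition~\ref{prop:minimal} to pass between strong multiplicativity and the existence of a least element.

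\emph{Forward direction.} Assume $P$ is strongly prime, and let $(s_i)_{i\in\Delta}$ be any family in $S$. We must show $\bigl(\bigcap_i s_iR\bigr)\cap S\neq\varnothing$. Suppose instead that $\bigcap_i s_iR\subseteq P$. Strong primeness then gives an index $\mu$ with $s_\mu R\subseteq P$. In particular $s_\mu\in P$, which contradicts $s_\mu\in S$. Hence $\bigcap_i s_iR\not\subseteq P$, so this intersection meets $S$, and $S$ is strongly multiplicative.

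\emph{Converse.} Assume $S$ is strongly multiplicative, and let $t\in S$ be a least element from Proposition~\ref{prop:minimal}. Let $(I_\lambda)$ be a family of ideals with $\bigcap_\lambda I_\lambda\subseteq P$, and suppose for contradiction that $I_\lambda\not\subseteq P$ for every $\lambda$. For each $\lambda$ pick $s_\lambda\in I_\lambda\setminus P$; then $s_\lambda\in S$, so $s_\lambda\mid t$. This gives $t\in s_\lambda R\subseteq I_\lambda$ for every $\lambda$, hence $t\in\bigcap_\lambda I_\lambda\subseteq P$. But $t\in S=R\setminus P$, a contradiction. So some $I_\mu\subseteq P$, and $P$ is strongly prime.

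Both directions are elementary, so there is no real obstacle. The step needing the most care is the converse, where the choice of the elements $s_\lambda$ uses the axiom of choice. One could bypass this by applying the definition of strong multiplicativity directly to the family of chosen elements, but that route still needs choice. The cleanest approach is to invoke the least element $t$ from Proposition~\ref{prop:minimal}: it depends only on $S$, and only the existence of some $s_\lambda\in I_\lambda\setminus P$ dividing $t$ is used.
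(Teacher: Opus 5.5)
Your proof is correct and matches the paper's: the forward direction is identical, and in the converse the paper applies strong multiplicativity directly to the family $(a_\lambda)$ with $a_\lambda\in I_\lambda\setminus P$, while you use the least element $t$ from Proposition~\ref{prop:minimal} as the common multiple. The underlying idea is the same. Your side remark is also right: once $t$ is fixed, each $\lambda$ separately yields $t\in I_\lambda$, so no simultaneous choice is actually needed.
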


\begin{proof}
Assume first that $P$ is strongly prime, and let $(s_i)_{i \in I}$ be a family in $R \setminus P$.
If
\[
\Bigl(\bigcap_{i \in I} s_iR\Bigr)\cap (R \setminus P)=\varnothing,
\]
then $\bigcap_i s_iR \subseteq P$.
Since $P$ is strongly prime, there exists $j$ with $s_jR \subseteq P$, so $s_j \in P$, a contradiction.
Therefore $R \setminus P$ is strongly multiplicative.

Conversely, assume that $R \setminus P$ is strongly multiplicative.
Let $(I_\lambda)_{\lambda \in \Lambda}$ be a family of ideals with $\bigcap_\lambda I_\lambda \subseteq P$.
If no $I_\lambda$ is contained in $P$, choose $a_\lambda \in I_\lambda \setminus P$ for each $\lambda$.
Then each $a_\lambda$ lies in $R \setminus P$, so strong multiplicativity yields an element
\[
s \in \Bigl(\bigcap_{\lambda \in \Lambda} a_\lambda R\Bigr)\cap (R \setminus P).
\]
But $\bigcap_\lambda a_\lambda R \subseteq \bigcap_\lambda I_\lambda \subseteq P$, which gives $s \in P$, a contradiction.
Hence some $I_\lambda$ is contained in $P$, and $P$ is strongly prime.
\end{proof}

\begin{corollary}\label{cor:prime-localization}
Let $P$ be a prime ideal of $R$.
The following are equivalent.
\begin{enumerate}
\item $P$ is strongly prime.
\item For every family of ideals $(I_\lambda)_{\lambda \in \Lambda}$,
\[
\Bigl(\bigcap_{\lambda \in \Lambda} I_\lambda\Bigr)_P
=
\bigcap_{\lambda \in \Lambda} (I_\lambda)_P.
\]
\item $D(P)=\Spec(R)\setminus V(P)$ is clopen in $\Spec(R)$.
\end{enumerate}
\end{corollary}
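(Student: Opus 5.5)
The plan is to apply Theorem~\ref{thm:main} to the multiplicative set $S=R\setminus P$ and to combine it with Theorem~\ref{thm:strongprime}. The subscript notation $(\,\cdot\,)_P$ means localization at $S=R\setminus P$. In (3), the set $D(P)$ should be read in the sense of the paper's notation $D(S)$ with $S=R\setminus P$. In other words,
\[
D(R\setminus P)=\{\mathfrak q\in\Spec(R):\mathfrak q\cap(R\setminus P)=\varnothing\}=\{\mathfrak q\in \Spec(R):\mathfrak q\subseteq P\},
\]
the set of generizations of $P$. I will make this reading explicit at the start of the proof. The displayed description ``$\Spec(R)\setminus V(P)$'' taken literally is always open, and its clopenness (that is, $V(P)$ being open) is a different condition. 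For instance, if $R_P\cong Re$ is local with several primes, then $V(P)=\{P\}$ need not be open. So the intended set must be $D(R\setminus P)$.

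For (1) $\Leftrightarrow$ (2): by Theorem~\ref{thm:strongprime}, $P$ is strongly prime if and only if $S=R\setminus P$ is strongly multiplicative. By Theorem~\ref{thm:main}, (1) $\Leftrightarrow$ (2) there, this holds if and only if localization at $S$ commutes with arbitrary intersections of ideals. That is exactly statement (2), since $(I)_P=I_S$.

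For (1) $\Leftrightarrow$ (3): again $P$ is strongly prime if and only if $S$ is strongly multiplicative. By Theorem~\ref{thm:main}, (1) $\Leftrightarrow$ (4) there, this holds if and only if $D(S)$ is clopen in $\Spec(R)$. With the reading fixed above, $D(S)=D(P)$, which gives (3).

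No step requires new computation; everything reduces to the two cited theorems. The only real obstacle is the notational point in (3), namely identifying the set whose clopenness is asserted. I will address it by stating the identification $D(P)=\{\mathfrak q:\mathfrak q\subseteq P\}$ at the outset, perhaps with a brief remark explaining why the literal complement of $V(P)$ is not the intended set.
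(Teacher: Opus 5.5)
Your proposal is correct and is exactly the paper's argument: apply Theorem~\ref{thm:strongprime} to $S=R\setminus P$, then read off (2) and (3) from items (2) and (4) of Theorem~\ref{thm:main}. Your notational caveat is also right and worth keeping: the paper's one-line proof only gives clopenness of $D(R\setminus P)=\{\mathfrak q\in\Spec(R):\mathfrak q\subseteq P\}$, and the literal set $\Spec(R)\setminus V(P)$ already fails for $R=k[[x]]$ and $P=(x)$, where $P$ is strongly prime but $\Spec(R)\setminus V(P)=\{(0)\}$ is not closed.
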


\begin{proof}
Apply Theorem~\ref{thm:strongprime} to the multiplicative set $R \setminus P$, and then invoke Theorem~\ref{thm:main}.
\end{proof}

Following \cite{Tekir}, a ring is called \emph{strongly zero-dimensional} if every prime ideal is strongly prime.
Combining Theorem~\ref{thm:strongprime}, Corollary~\ref{cor:prime-localization}, and Gottlieb's classification \cite[Theorem~2.4]{Gotlieb}, we obtain the following reformulation.

\begin{corollary}\label{cor:szd}
For a ring $R$, the following are equivalent.
\begin{enumerate}
\item $R$ is strongly zero-dimensional.
\item For every prime ideal $P$, the complement $R \setminus P$ is strongly multiplicative.
\item For every prime ideal $P$, localization at $P$ commutes with arbitrary intersections of ideals.
\item $R$ is zero-dimensional and quasi-semilocal.
\end{enumerate}
\end{corollary}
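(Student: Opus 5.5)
The equivalences (1)$\Leftrightarrow$(2)$\Leftrightarrow$(3) follow quickly from what is already proved; the real content is the link with (4) via Gottlieb's theorem.

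\emph{Step 1: (1)$\Leftrightarrow$(2)$\Leftrightarrow$(3).} By definition, $R$ is strongly zero-dimensional exactly when every prime $P$ is strongly prime. Theorem~\ref{thm:strongprime}, applied to each prime $P$ separately, converts this into the statement that every complement $R\setminus P$ is strongly multiplicative. This gives (1)$\Leftrightarrow$(2). Corollary~\ref{cor:prime-localization}, again applied prime by prime, shows that $P$ is strongly prime if and only if localization at $P$ commutes with arbitrary intersections of ideals. This gives (1)$\Leftrightarrow$(3).

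\emph{Step 2: (1)$\Leftrightarrow$(4).} Here I would invoke Gottlieb's classification \cite[Theorem~2.4]{Gotlieb}, which states that the rings in which every prime is strongly prime are exactly the zero-dimensional quasi-semilocal rings. The main work is to check that Gottlieb's notion matches the definition of strongly prime used here, namely the one from \cite{Tekir} involving arbitrary intersections of ideals.

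If a self-contained argument is wanted, I would proceed as follows.

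\emph{(1)$\Rightarrow$(4).} Let $\mathfrak m$ be a maximal ideal. By Theorem~\ref{thm:strongprime}, $R\setminus\mathfrak m$ is strongly multiplicative. Corollary~\ref{cor:prime-localization}(3) then says that $D(R\setminus \mathfrak m)$, which is the set of primes contained in $\mathfrak m$, is clopen.
\begin{itemize}
\item For zero-dimensionality, take a prime $P\subseteq\mathfrak m$. Since $R\setminus P$ is strongly multiplicative, Theorem~\ref{thm:main}(3) gives $R_P\cong Re$ for an idempotent $e$. Because $R_P$ is local, $Re$ is local, so Corollary~\ref{cor:indecomp} forces every element of $R\setminus P$ to map to a unit in $Re$. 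The spectrum of $Re$ is $D(e)$, a clopen set containing $P$, and primes of $R_P$ correspond to primes contained in $P$. I expect this to yield $\dim R_{\mathfrak m}=0$ for each $\mathfrak m$, but I am not fully sure how to close this sub-step.
\item For quasi-semilocality, note that each $\{\mathfrak m\}$ is then clopen in the compact space $\Spec(R)$. A compact space covered by open singletons is finite, so $R$ has only finitely many primes.
\end{itemize}

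\emph{(4)$\Rightarrow$(1).} A zero-dimensional quasi-semilocal ring has finite discrete spectrum. So each $D(R\setminus P)=\{P\}$ is clopen, and Corollary~\ref{cor:prime-localization} applies.

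The step I expect to be the main obstacle is showing, in the self-contained route, that strong primality forces $\dim R=0$. Clopenness of $D(R\setminus P)$ by itself does not obviously exclude chains of primes. My first attempt would use that $R_P\cong Re$ is local, so that $D(e)$ is the closure-stable set of generizations of $P$. Applying this both to $P$ and to a maximal $\mathfrak m\supsetneq P$ should give a contradiction via Proposition~\ref{prop:jac}. If that attempt fails, I would fall back on citing \cite{Gotlieb}.
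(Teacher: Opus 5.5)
Your Steps 1 and 2 coincide with the paper's proof: it obtains (1)$\Leftrightarrow$(2)$\Leftrightarrow$(3) prime by prime from Theorem~\ref{thm:strongprime} and Corollary~\ref{cor:prime-localization}, and cites Gottlieb's Theorem~2.4 for (1)$\Leftrightarrow$(4). In your optional self-contained route, the zero-dimensionality step you left open closes at once, because closedness alone rules out chains: $\{\mathfrak q\in\Spec(R):\mathfrak q\subseteq P\}=D(e)=V(1-e)$ is closed and hence stable under specialization, so every prime containing $P$ lies inside $P$, i.e.\ $P$ is maximal (equivalently, apply Theorem~\ref{thm:strongkrull} with $S=R\setminus P$ and $I=P$).
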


\begin{proof}
The equivalence of (1), (2), and (3) follows from Theorem~\ref{thm:strongprime} and Corollary~\ref{cor:prime-localization}.
The equivalence of (1) and (4) is exactly Gottlieb's theorem \cite[Theorem~2.4]{Gotlieb}.
\end{proof}

Following \cite{McCoy}, a ring $R$ is called a $\pi$-\emph{regular ring} if for each $a\in R$ there exist $x\in R$ and $n\in\mathbb{N}$ such that $a^n=xa^{2n}$. Equivalently, for each $a\in R$, the ideal $a^nR$ is idempotent for some $n\in\mathbb{N}$. If one can always take $n=1$, then $R$ is called a \emph{von Neumann regular ring} \cite{VonNeumann}. It is well known that a ring $R$ is $\pi$-regular if and only if it is zero-dimensional; moreover, $R$ is von Neumann regular if and only if it is reduced and $\pi$-regular (see \cite[Theorem 2.24 and Theorem 3.6]{Koc2020}). The following theorem and its immediate consequence provide new characterizations of $\pi$-regular and von Neumann regular rings in terms of strongly multiplicative sets.

\begin{theorem}
Let $R$ be a ring. Then $R$ is a $\pi$-regular ring if and only if $S_a=\{a^n \mid n\in\mathbb{N}\cup\{0\}\}$ is a strongly multiplicative set of $R$ for each non-nilpotent element $a\in R$.
\end{theorem}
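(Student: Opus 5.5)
We prove the two implications separately. Throughout we use Proposition~\ref{prop:minimal}: $S_a$ is strongly multiplicative exactly when it has a least element $a^m$, i.e.\ $a^n \mid a^m$ for every $n$. Note that $0\notin S_a$ precisely because $a$ is non-nilpotent, so $S_a$ is a genuine multiplicative set under that hypothesis.

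For ($\Rightarrow$), assume $R$ is $\pi$-regular and let $a$ be non-nilpotent. Choose $n$ and $x$ with $a^n = x a^{2n}$. We claim that $a^n$ is a least element of $S_a$.

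1. For $k\le n$, clearly $a^k \mid a^n$.
2. For larger exponents, iterate the relation: substituting $a^n = x a^{2n}$ into itself gives
\[
a^n = x a^{n} a^{n} = x^2 a^{3n} = \cdots = x^j a^{(j+1)n}
\]
for every $j\ge 1$.
3. Hence $a^n \in a^{(j+1)n}R$ for all $j$, and so $a^n \in a^kR$ for every $k$, since $a^{(j+1)n}R\subseteq a^kR$ once $(j+1)n\ge k$.

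Thus every element of $S_a$ divides $a^n$, and $S_a$ is strongly multiplicative by Proposition~\ref{prop:minimal}.

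For ($\Leftarrow$), let $a\in R$ be arbitrary. If $a$ is nilpotent, say $a^n=0$, then $a^n = 0\cdot a^{2n}$ and the $\pi$-regularity condition holds trivially. If $a$ is non-nilpotent, then $S_a$ has a least element $a^m$ by hypothesis. In particular $a^{2m}\in S_a$ divides $a^m$, so $a^m = x a^{2m}$ for some $x\in R$. This is exactly the $\pi$-regularity condition with $n=m$. Equivalently, this is the first step of Lemma~\ref{lem:idempotent} applied to $t=a^m$, which shows $a^mR=eR$ is idempotent.

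There is no real obstacle. The only care needed is:
\begin{enumerate}
\item the nilpotent case in ($\Leftarrow$), which must be handled separately because $S_a$ is then not a multiplicative set;
\item the iteration in step 2 of ($\Rightarrow$), which is what shows that $a^n$ is divisible by all higher powers of $a$, not just by $a^{2n}$.
\end{enumerate}
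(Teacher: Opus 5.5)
Your proof is correct, but it takes a more elementary route than the paper.

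The paper goes through Lemma~\ref{lem:idempotent} in both directions. For ($\Leftarrow$), from a least element $a^n$ it writes $a^n=ue$ with $e$ idempotent and $u$ a unit, and reads off $a^n=u^{-1}a^{2n}$. For ($\Rightarrow$), from $a^nR$ being idempotent it again writes $a^n=ue$, and then simply asserts $Ra^k=Ra^n$ for all $k\ge n$.

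You work directly with the relation $a^n=xa^{2n}$ instead. In ($\Leftarrow$) you only use that $a^{2m}$ divides the least element $a^m$, which is just the first line of the lemma's proof. In ($\Rightarrow$) your iteration $a^n=x^ja^{(j+1)n}$ gives exactly the justification of ``$a^n\in a^kR$ for every $k$'' that the paper leaves implicit. It also avoids invoking the lemma before $S_a$ is known to be strongly multiplicative; this is harmless in the paper, since the lemma's proof only needs $t\in t^2R$. You also handle the nilpotent case explicitly, which the paper leaves tacit.

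What the paper's route buys is the structural picture. It exhibits an idempotent $e$ with $a^nR=eR$, so that $R_{S_a}=R_a\cong Re$ is a direct factor of $R$, in the spirit of Theorem~\ref{thm:main}.

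One boundary detail in your ($\Leftarrow$): if the least element you pick is $a^0=1$, then $1=x\cdot 1$ is not the $\pi$-regularity condition, which requires an exponent $n\ge 1$. This is easily fixed. Whenever $a^m$ is least, so is $a^{m+1}$, because $a^{m+1}\mid a^m\mid a^{m+1}$. So you may take $m\ge 1$. The paper glosses over the same point.
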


\begin{proof}
$(\Leftarrow)$ Let $a\in R$ be a non-nilpotent element. By assumption, $S_a=\{a^n \mid n\in\mathbb{N}\cup\{0\}\}$ is a strongly multiplicative set of $R$. Thus, there exists a least element $t=a^n\in S_a$ with respect to inclusion of principal ideals, for some $n\in\mathbb{N}$. By Lemma~\ref{lem:idempotent}, we may write $a^n=ue$ for some idempotent $e\in R$ and some unit $u\in R$. Hence $a^{2n}=u^2e^2=u^2e=ua^n,$ and therefore $a^n=u^{-1}a^{2n}.$
This shows that $R$ is a $\pi$-regular ring.

$(\Rightarrow)$ Conversely, assume that $R$ is a $\pi$-regular ring, and let $a\in R$ be a non-nilpotent element. Then $S_a=\{a^n \mid n\in\mathbb{N}\cup\{0\}\}$ is a multiplicative set of $R$. Since $R$ is $\pi$-regular, there exists $n\in\mathbb{N}$ such that $a^nR$ is an idempotent ideal. By Lemma~\ref{lem:idempotent}, we have $a^n=ue$ for some idempotent $e\in R$ and some unit $u\in R$. It follows that for every $k\geq n$, $Ra^k=Ra^n.$
Hence $a^n$ is a least element of $S_a$ with respect to inclusion of principal ideals. Therefore $S_a$ is a strongly multiplicative set of $R$.
\end{proof}

\begin{corollary}
Let $R$ be a ring. Then $R$ is a von Neumann regular ring if and only if $R$ is reduced and $S_a=\{a^n \mid n\in\mathbb{N}\cup\{0\}\}$ is a strongly multiplicative set of $R$ for each $0\neq a\in R$.
\end{corollary}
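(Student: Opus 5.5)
The plan is to deduce the corollary from the preceding theorem together with the standard fact, recalled just before that theorem, that $R$ is von Neumann regular if and only if $R$ is reduced and $\pi$-regular.

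For the forward direction, assume $R$ is von Neumann regular. Then $R$ is reduced and $\pi$-regular. In a reduced ring every nonzero element is non-nilpotent. So for each $0\neq a\in R$ the preceding theorem gives that $S_a$ is strongly multiplicative. Note also that $0\notin S_a$ precisely because $a$ is not nilpotent, so $S_a$ is a genuine multiplicative set.

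For the converse, assume $R$ is reduced and $S_a$ is strongly multiplicative for every $0\neq a$. Since $R$ is reduced, the non-nilpotent elements of $R$ are exactly the nonzero elements. Hence the hypothesis of the preceding theorem holds for every non-nilpotent $a$, and $R$ is $\pi$-regular. Being reduced and $\pi$-regular, $R$ is von Neumann regular.

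The main point to check is the identification ``non-nilpotent $=$ nonzero'' in a reduced ring, which is immediate.

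If one wants to avoid the cited equivalence, there is a direct route. Take a least element $a^n=ue$ as in Lemma~\ref{lem:idempotent}. Then $a^n=u^{-1}a^{2n}$ makes $a(1-e)$ nilpotent, hence zero. So $a=ae$, and $ae$ is a unit in $Re$. This gives $a\in a^2R$.
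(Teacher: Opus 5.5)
Your proposal is correct and follows the paper's intended route: the paper states this corollary without a separate proof, as an immediate consequence of the preceding $\pi$-regularity theorem together with the recalled fact that von Neumann regular means reduced and $\pi$-regular, using that nonzero and non-nilpotent elements coincide in a reduced ring. Your optional direct argument is also sound: from $a^n=ue$ you get $a(1-e)=0$, and $ae$ is a unit in $Re$ because $(ae)^n=ue$ is a unit there.
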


\section{Torsion theory and related refinements}

The classification theorem can be reformulated entirely in torsion-theoretic language.
For a multiplicative set $S$, write
\[
t_S(M)=\{m \in M : sm=0 \text{ for some } s \in S\}
=
\ker(M \to M_S),
\]
and define
\[
\mathcal T_S=\{M : t_S(M)=M\},
\qquad
\mathcal F_S=\{M : t_S(M)=0\},
\]
together with the Gabriel filter
\[
\mathfrak F_S=\{I \subseteq R : I \cap S \neq \varnothing\}.
\]

\begin{theorem}\label{thm:torsion}
Let $S$ be a multiplicative set of $R$.
The following are equivalent.
\begin{enumerate}
\item $S$ is strongly multiplicative.
\item There exists an idempotent $e \in R$ such that
\[
\mathfrak F_S=\{I \subseteq R : e \in I\}.
\]
\item There exists an idempotent $e \in R$ such that, for every $R$-module $M$,
\[
M_S \cong eM
\qquad\text{and}\qquad
t_S(M)=(1-e)M.
\]
\end{enumerate}
When these conditions hold,
\[
\mathcal T_S=\{M : eM=0\},
\qquad
\mathcal F_S=\{M : (1-e)M=0\},
\]
and every $R$-module splits functorially as
\[
M=eM \oplus (1-e)M.
\]
In particular, the hereditary torsion theory attached to $S$ is split.
\end{theorem}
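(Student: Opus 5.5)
I would prove the cycle (1) $\Rightarrow$ (2) $\Rightarrow$ (3) $\Rightarrow$ (1) and then read off the descriptions of $\mathcal T_S$, $\mathcal F_S$ and the splitting from (3).

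For (1) $\Rightarrow$ (2), let $t$ be a least element of $S$ (Proposition~\ref{prop:minimal}) and write $t=ue$ as in Lemma~\ref{lem:idempotent}. If $I\cap S\neq\varnothing$, pick $s\in I\cap S$. Since $s\mid t$, we get $t\in I$, hence $e=u^{-1}t\in I$. Conversely, if $e\in I$ then $t=ue\in I$, so $t\in I\cap S$. Thus $\mathfrak F_S=\{I: e\in I\}$.

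For (2) $\Rightarrow$ (3), note first that $e\in eR$, so $eR\in\mathfrak F_S$. Choose $s\in eR\cap S$, say $s=re$. For any $s'\in S$, the ideal $s'R$ meets $S$, so $e\in s'R$, and hence $s'\mid e\mid s$. Therefore $s$ is a least element, and also $e\mid s$ and $s\mid e$. (This already yields (1) directly.) For (3), I would compute $t_S(M)$. If $sm=0$ with $s\in S$, then $e=as$ gives $em=0$, so $m=(1-e)m\in(1-e)M$. Conversely, $(1-e)M$ is killed by $s$, since $s=re$ gives $s(1-e)=0$. So $t_S(M)=(1-e)M$. For $M_S$, Theorem~\ref{thm:main} gives $R_S\cong R_e\cong Re$, and therefore $M_S\cong M\otimes_R R_S\cong M\otimes_R Re\cong eM$. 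I would check that the identification used in Theorem~\ref{thm:main} is compatible, i.e.\ that the localization map corresponds to $m\mapsto em$. This bookkeeping is the only mildly delicate step. The cleanest route is to verify directly that $M_S\to eM$, $m/s\mapsto e\,m\,s^{-1}$, is well defined: $se$ is invertible in $Re$ because $s\mid e$ and $e\mid s$ up to $S$-multiples.

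For (3) $\Rightarrow$ (1), apply (3) with $M=R$: $t_S(R)=(1-e)R$, so $(1-e)$ is killed by some $s\in S$, that is, $s=se\in eR$. Also $R_S\cong eR$ implies $D(S)=D(e)$. Alternatively and more elementarily, since $s$ becomes invertible in $R_S$, $e/1$ must be a unit. Indeed, the image of $1-e$ is $0$ in $R_S$ by the torsion computation, so $e/1=1$, which means $s'(e-1)=0$ for some $s'\in S$ and yields a common multiple. Then I would invoke Theorem~\ref{thm:main}(4) to conclude (1). I expect this direction to be the main obstacle, because (3) is stated only up to isomorphism. To rule out a trivial mismatch, I would use the torsion statement $t_S(R)=(1-e)R$, which is an actual equality. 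From it, $1-e$ is $S$-torsion, so $e/1=1$ in $R_S$, and $D(S)\subseteq D(e)$. Conversely, $e\in R$ satisfies $e\cdot(1-e)=0$ and $t_S(R)\ni(1-e)$. To get $D(e)\subseteq D(S)$, I would show that each $s\in S$ acts invertibly on $eR$: if $esx=0$ then $ex\in t_S(R)\cap eR=(1-e)R\cap eR=0$, and combine this with $eR\cong R_S$ as modules. Failing that, I would fall back on the ring statement from Theorem~\ref{thm:main}(3).

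Finally, when the conditions hold, $M=eM\oplus(1-e)M$ functorially, since $e$ is central and idempotent. A module is torsion iff $(1-e)M=M$ iff $eM=0$, and torsionfree iff $(1-e)M=0$. Each module is thus a direct sum of its torsion part and a torsionfree part, so the torsion theory is split.
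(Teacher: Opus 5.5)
Your argument is correct in substance, but it is organized differently from the paper's. The paper proves (1)$\Leftrightarrow$(2), (1)$\Rightarrow$(3) and (3)$\Rightarrow$(2):
\begin{itemize}
\item it gets (2)$\Rightarrow$(1) by reading off $D(S)=D(e)$ and invoking Theorem~\ref{thm:main};
\item it gets (1)$\Rightarrow$(3) by tensoring with $R_S\cong Re$ and asserting that the localization map becomes $m\mapsto em$;
\item it gets (3)$\Rightarrow$(2) from the torsion half of (3) alone, applied to cyclic modules: $I\in\mathfrak F_S$ iff $R/I$ is $S$-torsion iff $(1-e)(R/I)=R/I$ iff $e\in I$.
\end{itemize}
Your cycle has two real advantages. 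Applying (2) to $eR$ and to each $s'R$ produces a least element $s\in S\cap eR$ with $sR=eR$, so (2)$\Rightarrow$(1) needs no spectral detour. And your elementwise computation of $t_S(M)=(1-e)M$ from $s'\mid e\mid s$ replaces the paper's unchecked claim about the localization map.

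The weak spot is (3)$\Rightarrow$(1), which you write hesitantly. The sentence saying that $e/1=1$ in $R_S$ ``yields a common multiple'' is not right. It only gives some $s_0\in S$ with $s_0=s_0e$, i.e.\ $D(S)\subseteq D(e)$. Likewise, your claim that $R_S\cong eR$ implies $D(S)=D(e)$ is left unjustified.

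What actually closes the argument is the step you compress into ``combine this with $eR\cong R_S$ as modules.'' That isomorphism is $R$-linear, and each $s\in S$ acts bijectively on $R_S$, so each $s$ acts bijectively on $eR$. Hence $e\in s\,eR\subseteq sR$. Then $s\mid e\mid s_0$ for all $s\in S$, so $s_0$ is a least element. Your injectivity check via $t_S(R)\cap eR=0$ is then redundant.

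Spell this out. Alternatively, and more economically as in the paper, apply the torsion equality to the $S$-torsion module $R/sR$. It gives $(1-e)(R/sR)=R/sR$, i.e.\ $e\in sR$. This way the torsion description alone already forces strong multiplicativity, and you never need to handle the abstract isomorphism $M_S\cong eM$.
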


\begin{proof}
Assume that $S$ is strongly multiplicative, let $t \in S$ be least, and write $t=ue$ as in Lemma~\ref{lem:idempotent}.

Let $I$ be an ideal of $R$.
If $I \cap S \neq \varnothing$, choose $s \in I \cap S$.
Since $s \mid t$, we get $t \in I$, hence $e=u^{-1}t \in I$.
Conversely, if $e \in I$, then $t=ue \in I \cap S$.
Therefore
\[
\mathfrak F_S=\{I \subseteq R : e \in I\},
\]
so (1) implies (2).

Assume (2).
For a prime ideal $\mathfrak p$,
\[
\mathfrak p \cap S=\varnothing
\quad\Longleftrightarrow\quad
\mathfrak p \notin \mathfrak F_S
\quad\Longleftrightarrow\quad
e \notin \mathfrak p.
\]
Hence $D(S)=D(e)$, and Theorem~\ref{thm:main} gives (1).
Thus (2) implies (1).

Assume again that $S$ is strongly multiplicative.
By Theorem~\ref{thm:main}, $R_S \cong Re$.
For every $R$-module $M$,
\[
M_S
\cong
M \otimes_R R_S
\cong
M \otimes_R Re
\cong
eM.
\]
Under this identification, the localization map $M \to M_S$ becomes $m \mapsto em$, so
\[
t_S(M)=\ker(m \mapsto em)=(1-e)M.
\]
This proves (1) implies (3), and the displayed descriptions of $\mathcal T_S$ and $\mathcal F_S$ are immediate.

Finally, assume (3).
An ideal $I$ belongs to $\mathfrak F_S$ if and only if $R/I$ is $S$-torsion, that is, if and only if $t_S(R/I)=R/I$.
By (3), this is equivalent to
\[
(1-e)(R/I)=R/I,
\]
which holds if and only if $e \in I$.
Hence (3) implies (2).
\end{proof}

\begin{remark}
Theorem~\ref{thm:torsion} shows that strongly multiplicative sets are exactly the multiplicative sets whose localization torsion theory is defined by a central idempotent.
Thus localization is literally projection onto a direct summand.
\end{remark}

We next incorporate the almost multiplicative variant.

\begin{definition}\cite{BaekLim}
A nonempty subset $S \subseteq R$ is \emph{almost multiplicative} if for every $a,b \in S$ there exist integers $m,n \geq 1$ such that $a^mb^n \in S$.
We write $\langle S\rangle$ for the multiplicative subset generated by $S$.
\end{definition}

\begin{theorem}\label{thm:almost}
Let $S \subseteq R$ be almost multiplicative, and let $R_S^{\mathrm{am}}$ be the quotient ring attached to $S$ in the sense of Baek and Lim.
Then the canonical map $R_S^{\mathrm{am}} \longrightarrow R_{\langle S\rangle}$ is an isomorphism \cite[Theorem~2.5]{BaekLim}.
Consequently, the following are equivalent.
\begin{enumerate}
\item For every family of ideals $(I_\lambda)_{\lambda \in \Lambda}$,
\[
\Bigl(\bigcap_{\lambda \in \Lambda} I_\lambda\Bigr)R_S^{\mathrm{am}}
=
\bigcap_{\lambda \in \Lambda} I_\lambda R_S^{\mathrm{am}}.
\]
\item The multiplicative hull $\langle S\rangle$ is strongly multiplicative.
\item There exists an idempotent $e \in R$ such that $R_S^{\mathrm{am}} \cong Re$.
\item The set
\[
\{\mathfrak p \in \Spec(R) : \mathfrak p \cap S=\varnothing\}
\]
is clopen in $\Spec(R)$.
\end{enumerate}
\end{theorem}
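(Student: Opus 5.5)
The plan is to transport everything to the ordinary multiplicative set $T=\langle S\rangle$ and then apply Theorem~\ref{thm:main} to $T$. The first step is the cited isomorphism $R_S^{\mathrm{am}}\cong R_T$ from \cite[Theorem~2.5]{BaekLim}, which we take as given. We also need to know that this isomorphism is compatible with the structure maps from $R$: the canonical map $R\to R_S^{\mathrm{am}}$ composed with it is the localization map $R\to R_T$. This holds because the map is canonical, sending $r/s\mapsto r/s$. Consequently, for any ideal $I$ of $R$, the extension $IR_S^{\mathrm{am}}$ corresponds to $IR_T=I_T$. Conditions (1) and (3) therefore translate verbatim into conditions (2) and (3) of Theorem~\ref{thm:main} for $T$.

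We should also check that $T$ really is a multiplicative set, i.e.\ $0\notin T$, since otherwise Theorem~\ref{thm:main} does not apply. A product $s_1\cdots s_k$ of elements of $S$ is nonzero as soon as some power of it lies in $S$. The plan is to show by induction on $k$, using the almost multiplicative axiom, that some product $s_1^{m_1}\cdots s_k^{m_k}$ with all $m_i\ge1$ lies in $S$. Repeated application gives a product of this form, and multiplying each exponent up to a common $N=\max m_i$ shows that $(s_1\cdots s_k)^N$ divides an element of $S$ only if we are careful. A cleaner route is to take $0\notin S$ as a standing assumption implicit in Baek and Lim's setting and argue that $(s_1\cdots s_k)^{N}$ is a multiple of an element of $S$. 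This would fail if it were $0$, but a multiple can be $0$, so the argument needs adjusting. Alternatively, if $0\in T$ then $R_T=0$, and the isomorphism from \cite{BaekLim} presupposes a nonzero ring. I would simply note that the cited theorem is stated for $0\notin\langle S\rangle$ (this is implicit in the definition of $R_S^{\mathrm{am}}$ as a ring with $1\ne0$) and take it as given. This bookkeeping is the main point needing care.

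With that in place, (1)$\Leftrightarrow$(2)$\Leftrightarrow$(3) follows immediately from Theorem~\ref{thm:main} (1)$\Leftrightarrow$(2)$\Leftrightarrow$(3) applied to $T$. For (4), it suffices to show that $D(S)=D(T)$, after which Theorem~\ref{thm:main}(4) finishes the argument. The inclusion $D(T)\subseteq D(S)$ holds because $S\subseteq T$. Conversely, if a prime $\mathfrak p$ misses $S$ and $s_1\cdots s_k\in T$, then primality gives $s_1\cdots s_k\notin\mathfrak p$, since no $s_i$ lies in $\mathfrak p$. Hence $\mathfrak p\cap T=\varnothing$, which completes the equivalences.
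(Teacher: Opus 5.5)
Your proposal is the paper's proof: pass to $\langle S\rangle$ through the Baek--Lim isomorphism, show $D(S)=D(\langle S\rangle)$ by primality, and apply Theorem~\ref{thm:main}; the compatibility of the structure maps and the condition $0\notin\langle S\rangle$ are points the paper leaves implicit. On the latter, the argument you abandoned does work once the divisibility is read the other way: $s_1\cdots s_k$ divides the element $s_1^{m_1}\cdots s_k^{m_k}\in S$ from your induction (all $m_i\ge 1$), so $s_1\cdots s_k=0$ would force $0\in S$; hence $0\notin S$ already gives $0\notin\langle S\rangle$, and no appeal to the cited theorem is needed.
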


\begin{proof}
By Baek and Lim, $R_S^{\mathrm{am}} \cong R_{\langle S\rangle}$.
Also, for a prime ideal $\mathfrak p$,
\[
\mathfrak p \cap S=\varnothing
\quad\Longleftrightarrow\quad
\mathfrak p \cap \langle S\rangle=\varnothing.
\]
Indeed, if $\mathfrak p$ meets $\langle S\rangle$, then some finite product of elements of $S$ lies in $\mathfrak p$, and primality forces $\mathfrak p$ to meet $S$.
Thus the spectral subset defined by $S$ is exactly $D(\langle S\rangle)$.
Now apply Theorem~\ref{thm:main} to the multiplicative set $\langle S\rangle$.
\end{proof}

Let $R$ be a ring and $M$ a right $R$-module. We say that $M$ is a \emph{Mittag--Leffler module} if for every family $\{Q_i\}_{i\in I}$ of left $R$-modules, the canonical map
\[
M\otimes_R \prod_{i\in I} Q_i \longrightarrow \prod_{i\in I}(M\otimes_R Q_i)
\]
is injective.

The next result is taken from the work of Datta, Epstein, and Tucker \cite{DET}.

\begin{theorem}[Datta--Epstein--Tucker]\label{thm:DET}
Let $M$ be a flat $R$-module.
The following are equivalent.
\begin{enumerate}
\item $M$ is a Mittag--Leffler module.
\item For every finitely generated $R$-module $L$ and every family of submodules $(N_\lambda)_{\lambda \in \Lambda}$ of $L$,
\[
M \otimes_R \Bigl(\bigcap_{\lambda \in \Lambda} N_\lambda\Bigr)
=
\bigcap_{\lambda \in \Lambda} \bigl(M \otimes_R N_\lambda\bigr)
\qquad\text{inside } M \otimes_R L.
\]
\end{enumerate}
\end{theorem}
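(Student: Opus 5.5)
We prove Theorem~\ref{thm:DET} by reducing both conditions to statements about the canonical maps
\[
\phi_{\mathcal Q}\colon M\otimes_R \prod_i Q_i\to \prod_i (M\otimes_R Q_i).
\]
Since $M$ is flat, $M\otimes_R-$ is exact. So for a submodule $N\subseteq L$ we may view $M\otimes_R N$ as a submodule of $M\otimes_R L$, and both sides of the equality in (2) live inside $M\otimes_R L$.

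The first step translates intersections into kernels. For a family $(N_\lambda)$ of submodules of $L$, we have an exact sequence
\[
0\to \bigcap_\lambda N_\lambda \to L \to \prod_\lambda L/N_\lambda .
\]
Tensoring with the flat module $M$ gives exactness of $0\to M\otimes \bigcap_\lambda N_\lambda\to M\otimes L\to M\otimes\prod_\lambda L/N_\lambda$. On the other hand, $\bigcap_\lambda (M\otimes N_\lambda)$ is the kernel of the composite
\[
M\otimes L\to M\otimes \prod_\lambda L/N_\lambda \xrightarrow{\ \phi\ } \prod_\lambda M\otimes L/N_\lambda ,
\]
since $M\otimes N_\lambda=\ker(M\otimes L\to M\otimes L/N_\lambda)$ by flatness. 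Thus (2) holds for a given family exactly when $\phi$ is injective on the image of $M\otimes L$ in $M\otimes\prod_\lambda L/N_\lambda$.

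For (1)$\Rightarrow$(2), injectivity of $\phi$ for all families of left modules is exactly the Mittag--Leffler hypothesis. The kernels of the two maps out of $M\otimes L$ then agree, which gives the equality in (2).

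The converse (2)$\Rightarrow$(1) is the main obstacle. Let $x\in\ker\phi$ for a family $(Q_i)$. The plan is to first reduce to the case where every $Q_i$ is a quotient of a single finitely generated module $L$, so that $\prod_i Q_i$ is replaced by the image of $L$ in it.

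We first take $x$ to come from $M\otimes F$ for a finitely generated submodule $F\subseteq\prod_i Q_i$; this uses that tensor products commute with direct limits and that $\prod_i Q_i$ is the union of its finitely generated submodules. Choose a surjection from a finitely generated free module $L\to F$, and let $N_i=\ker(L\to Q_i)$. Then $x$ lifts to some $y\in M\otimes L$ whose image in each $M\otimes Q_i$ vanishes. Since $L/N_i\hookrightarrow Q_i$ and $M$ is flat, this means $y\in\bigcap_i (M\otimes N_i)$.

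By (2), $y$ lies in $M\otimes \bigcap_i N_i$, and $\bigcap_i N_i$ is exactly the kernel of $L\to F$. Hence the image of $y$ in $M\otimes F$ is $0$, and its image in $M\otimes\prod_i Q_i$ is $0$ as well, so $x=0$. One must check carefully that the element of $M\otimes F$ we chose maps to $x$ and that flatness gives injectivity of $M\otimes F\to M\otimes\prod_i Q_i$; this bookkeeping is where the argument needs the most care.

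Alternatively, one can cite the Raynaud--Gruson criterion: for flat $M$, Mittag--Leffler is equivalent to injectivity of $M\otimes R^I\to M^I$ for all sets $I$. One then verifies only that case, taking $L=R^n$ and $N_i$ the kernels of the coordinate maps $R^n\to R$.
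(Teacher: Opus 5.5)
Your main argument is correct and self-contained. The paper, by contrast, gives no proof of Theorem~\ref{thm:DET}; it simply imports the result from Datta--Epstein--Tucker.

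Both directions go through:
\begin{enumerate}
\item For (1)$\Rightarrow$(2), flatness makes the kernel of $M\otimes_R L\to M\otimes_R\prod_\lambda L/N_\lambda$ equal to $M\otimes_R\bigcap_\lambda N_\lambda$. The kernel of its composite with $\phi$ is $\bigcap_\lambda(M\otimes_R N_\lambda)$, and injectivity of $\phi$ identifies the two. This never uses that $L$ is finitely generated.
\item For (2)$\Rightarrow$(1), your reduction to a finitely generated $F\subseteq\prod_i Q_i$ and a finite free $L\twoheadrightarrow F$ uses (2) only for finite free $L$. The key identity $\bigcap_i N_i=\ker(L\to F)$ holds because $F\hookrightarrow\prod_i Q_i$.
\end{enumerate}
The bookkeeping you flag is harmless. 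By construction $x$ is the image of the chosen $x_F\in M\otimes_R F$, so $x_F=0$ gives $x=0$ by functoriality. Injectivity of $M\otimes_R F\to M\otimes_R\prod_i Q_i$ is not needed.

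The citation buys the paper brevity. Your route buys an elementary proof that works directly from the product-map definition of Mittag--Leffler adopted in the paper. It also shows that (1) yields (2) for arbitrary $L$, and that finite free $L$ suffice in (2).

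You should, however, delete the alternative in your last paragraph: the criterion you attribute to Raynaud--Gruson is false. Take $R=\mathbb{Z}$ and $M=\mathbb{Q}$.
\begin{enumerate}
\item $M$ is flat, and $\mathbb{Q}\otimes_{\mathbb{Z}}\mathbb{Z}^I\to\mathbb{Q}^I$ is injective for every set $I$. Every element has the form $\frac{1}{s}\otimes a$, it maps to $(a_i/s)_i$, and $\mathbb{Z}^I$ is torsion-free.
\item Yet $\mathbb{Q}$ is not Mittag--Leffler. For $Q_n=\mathbb{Z}/n!\mathbb{Z}$ one has $\prod_n(\mathbb{Q}\otimes_{\mathbb{Z}} Q_n)=0$. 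But $\mathbb{Q}\otimes_{\mathbb{Z}}\prod_n Q_n\neq 0$, because $(1,1,1,\dots)$ has infinite order.
\item Equivalently, (2) fails for $L=\mathbb{Z}$ and $N_n=n!\mathbb{Z}$. This is exactly the example in the paper's introduction.
\end{enumerate}
Your own reduction explains the failure. Testing only $Q_i=R$ amounts to testing (2) only for families of kernels of linear functionals $R^n\to R$. Such families miss submodules like $n!\mathbb{Z}\subseteq\mathbb{Z}$, since the only kernels of functionals $\mathbb{Z}\to\mathbb{Z}$ are $0$ and $\mathbb{Z}$. This is why your first argument, which allows arbitrary $Q_i$, is the one that actually proves (2)$\Rightarrow$(1).
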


\begin{corollary}\label{cor:ML}
Let $S$ be a multiplicative set of $R$.
If $R_S$ is a Mittag--Leffler $R$-module, then for every finitely generated $R$-module $L$ and every family of submodules $(N_\lambda)_{\lambda \in \Lambda}$ of $L$,
\[
\Bigl(\bigcap_{\lambda \in \Lambda} N_\lambda\Bigr)_S
=
\bigcap_{\lambda \in \Lambda} (N_\lambda)_S
\qquad\text{inside } L_S.
\]
In particular, $S$ is strongly multiplicative.
\end{corollary}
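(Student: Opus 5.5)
The plan is to combine Theorem~\ref{thm:DET} with Theorem~\ref{thm:main}. First, $R_S$ is a flat $R$-module, so Theorem~\ref{thm:DET} applies to $M=R_S$. Because $R_S$ is assumed Mittag--Leffler, condition (2) of that theorem holds. For a finitely generated module $L$ and a family of submodules $N_\lambda\subseteq L$, this gives
\[
R_S\otimes_R\Bigl(\bigcap_\lambda N_\lambda\Bigr)=\bigcap_\lambda \bigl(R_S\otimes_R N_\lambda\bigr)\qquad\text{inside } R_S\otimes_R L.
\]

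The next step is to translate this into the language of localizations. Use the canonical isomorphisms $R_S\otimes_R N\cong N_S$, natural in $N$. By flatness, for any submodule $N\subseteq L$ the map $R_S\otimes_R N\to R_S\otimes_R L$ is injective, and its image is exactly the submodule $N_S=\{n/s : n\in N,\ s\in S\}$ of $L_S$. Under the identification $R_S\otimes_R L\cong L_S$, the displayed equality therefore becomes
\[
\Bigl(\bigcap_\lambda N_\lambda\Bigr)_S=\bigcap_\lambda (N_\lambda)_S\qquad\text{inside } L_S,
\]
which is the first assertion. The only care needed is that both sides are regarded as subsets of the same ambient module $L_S$; naturality of the tensor isomorphism with respect to the inclusions $N_\lambda\hookrightarrow L$ handles this.

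For the final claim, specialize to $L=R$, which is finitely generated, and take the $N_\lambda$ to be arbitrary ideals $I_\lambda$. The result is exactly condition (2) of Theorem~\ref{thm:main}, and the implication (2)$\Rightarrow$(1) of that theorem shows that $S$ is strongly multiplicative.

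I expect no real obstacle here. The substantive content is carried by Theorem~\ref{thm:DET}, which is quoted from the literature, and the only point to verify is the routine identification of the tensor products with localized submodules, which relies on the flatness of $R_S$.
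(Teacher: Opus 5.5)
Your proposal is correct and follows the paper's proof: you apply Theorem~\ref{thm:DET} to the flat Mittag--Leffler module $R_S$, identify $R_S\otimes_R N$ with $N_S\subseteq L_S$, and then specialize to $L=R$ to invoke (2)$\Rightarrow$(1) of Theorem~\ref{thm:main}. Your remark on the naturality of the identification with respect to the inclusions $N_\lambda\hookrightarrow L$ simply spells out what the paper leaves implicit.
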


\begin{proof}
Localization is tensoring with $R_S$.
Since $R_S$ is flat and Mittag--Leffler, Theorem~\ref{thm:DET} applies and gives
\[
R_S \otimes_R \Bigl(\bigcap_{\lambda \in \Lambda} N_\lambda\Bigr)
=
\bigcap_{\lambda \in \Lambda} \bigl(R_S \otimes_R N_\lambda\bigr).
\]
This is exactly the displayed equality.
Taking $L=R$ recovers the ideal case, so Theorem~\ref{thm:main} shows that $S$ is strongly multiplicative.
\end{proof}

\section{The open question on chains of \texorpdfstring{$S$}{S}-prime ideals}

We now return to the $S$-prime ideal theory of Hamed and Malek \cite{Hamed}.
Recall that an ideal $P$ with $P \cap S=\varnothing$ is called \emph{$S$-prime} if there exists $s \in S$ such that for all $a,b \in R$,
\[
ab \in P
\quad\Longrightarrow\quad
sa \in P \ \text{or}\ sb \in P.
\]
An $S$-prime ideal minimal among those containing a fixed ideal $I$ is called a \emph{minimal $S$-prime over $I$}; when $I=0$, we simply speak of an \emph{$S$-minimal prime ideal}.

Hamed and Malek proved that if $S$ is strongly multiplicative, then the intersection of a chain of $S$-prime ideals is again $S$-prime and minimal $S$-primes exist over every ideal disjoint from $S$ \cite[Proposition~5]{Hamed}.
They also asked whether the strong multiplicativity of $S$ is really necessary.
The next examples show that the answer is subtle: the condition is necessary in general, but not in every special case.

\begin{example}[Failure without strong multiplicativity]\label{ex:counter1}
Let
\[
A=\mathbb{Z}[X_1,X_2,\ldots],
\qquad
Q=(2X_1,2^2X_2,\ldots,2^nX_n,\ldots),
\qquad
R=A/Q,
\]
and let
\[
S=\{2^n+Q : n \in \mathbb{N}\cup\{0\}\}.
\]
Then $S$ is a multiplicative set but not a strongly multiplicative one, because
\[
\bigcap_{n \geq 0}(2^n+Q)R=0.
\]
Set
\[
\mathcal P=(3,X_1,X_2,\ldots)/Q.
\]
As in \cite{Hamed}, $\mathcal P$ is an $S$-prime ideal, and so is
\[
\mathcal P_n=(2^n+Q)\mathcal P
\qquad\text{for each } n \geq 0.
\]
The ideals $\mathcal P_n$ form a descending chain, but $\bigcap_{n \geq 0}\mathcal P_n=0$ is not $S$-prime.
Indeed, for a fixed $2^n+Q \in S$,
\[
(2^{n+2}+Q)(X_{n+2}+Q)=0,
\]
while neither $(2^n+Q)(2^{n+2}+Q)$ nor $(2^n+Q)(X_{n+2}+Q)$ is zero in $R$.
Thus the intersection of a descending chain of $S$-prime ideals need not be $S$-prime when $S$ is not strongly multiplicative.
\end{example}

\begin{example}[Non-necessity in a special case]\label{ex:counter2}
Let $R=\mathbb{Z}$ and let $S=\mathbb{Z}\setminus 2\mathbb{Z}$.
Then $S$ is not strongly multiplicative, because $2\mathbb{Z}$ is not strongly prime and Theorem~\ref{thm:strongprime} applies.
For each $n \geq 0$, let $I_n=3^n2\mathbb{Z}.$
Since $(I_n:3^n)=2\mathbb{Z}$ is prime, each $I_n$ is $S$-prime by \cite[Proposition~1]{Hamed}.
The ideals $I_n$ form a descending chain, and $\bigcap_{n \geq 0} I_n=0$ is again $S$-prime.
So the strong multiplicativity of $S$ is not necessary in every isolated example.
\end{example}

The next examples address minimal $S$-primes.

\begin{example}[A ring without $S$-minimal primes]\label{ex:counter3}
Let $R$ and $S$ be as in Example~\ref{ex:counter1}.
We claim that $R$ has no $S$-minimal prime ideal.
Suppose that $\mathcal P$ is such an ideal.
By \cite[Proposition~2]{Hamed}, each $(2^n+Q)\mathcal P$ is $S$-prime.
Minimality of $\mathcal P$ forces
\[
\mathcal P=(2^n+Q)\mathcal P \subseteq (2^n+Q)R
\qquad\text{for every } n,
\]
so
\[
\mathcal P \subseteq \bigcap_{n \geq 0}(2^n+Q)R=0.
\]
Hence $\mathcal P=0$, but Example~\ref{ex:counter1} shows that $0$ is not $S$-prime.
This contradiction proves the claim.
\end{example}

\begin{example}[A non-strong example with a unique $S$-minimal prime]\label{ex:counter4}
Let $R=\mathbb{Z}\times\mathbb{Z}$ and let
\[
S=\Reg(\mathbb{Z})\times\{1\}=(\mathbb{Z}\setminus\{0\})\times\{1\}.
\]
Then $S$ is multiplicative but not strongly multiplicative, because
\[
\bigcap_{0 \neq x \in \mathbb{Z}}(x,1)R=(0,1)R
\]
is disjoint from $S$.
The zero ideal is not $S$-prime.
A direct inspection of ideals disjoint from $S$ shows that $\{0\}\times\mathbb{Z}$ is the unique $S$-minimal prime ideal of $R$.
Thus the existence of an $S$-minimal prime does not force $S$ to be strongly multiplicative.
\end{example}

The next theorem isolates the consequences that do hold under strong multiplicativity.

\begin{theorem}\label{thm:sminimal}
Let $S$ be a multiplicative set of $R$.
\begin{enumerate}
\item If $P$ is a minimal $S$-prime ideal of $R$, then $sP=P$ for every $s \in S$.
In particular, $P \subseteq \bigcap_{s \in S} sR.$
\item Suppose that $S$ is strongly multiplicative.
If $S \nsubseteq U(R)$, then no $S$-minimal prime ideal is a prime ideal.
If $S \subseteq U(R)$, then every $S$-prime ideal is an ordinary prime ideal, hence every $S$-minimal prime is a minimal prime of $R$.
\item If $S$ is strongly multiplicative and $S \nsubseteq U(R)$, then $R$ possesses $S$-minimal prime ideals, and all of them are non-prime.
\item Let $I$ be an ideal with $I \cap S=\varnothing$, and assume that $S$ is strongly multiplicative.
If $I+\bigcap_{s \in S} sR \neq R,$ then every $S$-prime ideal minimal over $I$ is non-prime.
If $I+\bigcap_{s \in S} sR=R,$ then every $S$-prime ideal minimal over $I$ is an ordinary minimal prime over $I$.
\end{enumerate}
\end{theorem}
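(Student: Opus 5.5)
The plan is to treat part (1) directly from the definition of minimality. Parts (2)--(4) would all be read off from one explicit description of $S$-prime ideals, using the Peirce decomposition coming from Lemma~\ref{lem:idempotent}.

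\textbf{Part (1).} Let $P$ be a minimal $S$-prime ideal and $s\in S$. By \cite[Proposition~2]{Hamed}, $sP$ is again $S$-prime. Since $sP\subseteq P$, minimality gives $sP=P$. In particular $P=sP\subseteq sR$ for every $s\in S$.

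\textbf{The structural description.} Now let $S$ be strongly multiplicative, with least element $t=ue$ as in Lemma~\ref{lem:idempotent}. Then $\bigcap_{s\in S}sR=tR=eR$, and $e\neq 0$ because $t\neq 0$. Also $e=1$ exactly when $S\subseteq U(R)$, since $t$ is a unit iff $e=1$.

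Write $R=Re\times R(1-e)$, every ideal as $P=P_1\times P_2$, and $s=(s_1,s_2)$ for $s\in S$. Since each $s$ divides $t=(u_1,0)$, every $s_1$ is a unit of $Re$.

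The key claim is that $P$ is $S$-prime if and only if $P_1$ is a prime ideal of $Re$, with $P_2$ an arbitrary ideal of $R(1-e)$.
\begin{enumerate}
\item Disjointness: $P\cap S=\varnothing$ iff $P_1\neq Re$, because first coordinates of elements of $S$ are units.
\item Sufficiency: if $P_1$ is prime, use the witness $t$. If $ab\in P$, then $a_1b_1\in P_1$, so say $a_1\in P_1$. Then $ta=(u_1a_1,0)\in P$.
\item Necessity: if $P$ is $S$-prime with witness $s$ and $a_1b_1\in P_1$, apply the $S$-prime condition to $(a_1,0)(b_1,0)\in P$. This gives $s_1a_1\in P_1$ or $s_1b_1\in P_1$, and $s_1$ is a unit.
\end{enumerate}
It follows that the $S$-primes minimal over $I=I_1\times I_2$ are exactly $P_1\times I_2$ with $P_1$ a minimal prime of $Re$ over $I_1$. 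Such primes exist whenever $I\cap S=\varnothing$, i.e.\ whenever $I_1\neq Re$.

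\textbf{Parts (2)--(4).} Such a $P_1\times I_2$ is a prime ideal of $R$ iff $I_2=R(1-e)$, i.e.\ iff $1-e\in I$. I would then check that $1-e\in I$ is equivalent to $I+eR=R$. Indeed, if $1=x+er$ with $x\in I$, then $1-e=(1-e)x\in I$; the converse is clear.
\begin{enumerate}
\item Part (4) follows, after noting that in the prime case $P_1\times R(1-e)$ is also minimal among all primes over $I$. Any prime over $I$ contains $1-e$, so it has the form $Q_1\times R(1-e)$ with $Q_1$ a prime over $I_1$.
\item Part (3) is the case $I=0$ with $e\neq 1$: $S$-minimal primes exist (or one cites \cite[Proposition~5]{Hamed}), and none is prime because $1-e\notin 0$.
\item Part (2) in the case $S\nsubseteq U(R)$ is the same observation. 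When $S\subseteq U(R)$, an $S$-prime condition with unit $s$ is exactly primality, and properness follows from $1\in S$.
\end{enumerate}

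\textbf{Main obstacle.} The step I expect to need the most care is the characterization of $S$-primes in the product decomposition. One must use the fixed witness $t$ for sufficiency, and in the necessity direction the $S$-prime condition must be applied to elements supported in the $e$-component. Otherwise the arbitrary $P_2$ would seem to obstruct the argument. Everything else is bookkeeping in $Re\times R(1-e)$.
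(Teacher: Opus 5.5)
Your proof is correct, and part (1) is the same as the paper's. For parts (2)--(4), however, you take a genuinely different route.

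The paper argues as follows:
\begin{itemize}
\item Part (2) is derived from part (1). An $S$-minimal prime satisfies $P\subseteq\bigcap_{s\in S}sR=eR$. Primality applied to $e(1-e)=0$ then gives either $t=ue\in P\cap S$ or $1-e\in eR$, and both are impossible when $e\neq 1$.
\item Existence in (3) is cited from \cite[Proposition~5]{Hamed}.
\item Part (4) is obtained by passing to $R/I$, using Corollary~\ref{cor:factor} and the correspondence \cite[Proposition~3]{Hamed}. The identity $\bigcap_{\overline s}\overline s(R/I)=(I+\bigcap_{s}sR)/I$ decides whether $\overline S\subseteq U(R/I)$.
\end{itemize}

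You instead classify every $S$-prime ideal through the Peirce decomposition: $P$ is $S$-prime if and only if $eP$ is a prime of $Re$, with $(1-e)P$ arbitrary. From this, the $S$-primes minimal over $I$ are exactly $P_1\times(1-e)I$ with $P_1$ minimal over $eI$. These are prime precisely when $1-e\in I$, i.e.\ when $I+eR=R$.

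Your route is self-contained. It reproves the needed existence statement and bypasses both the quotient correspondence and part (1). It also yields the explicit shape of the minimal $S$-primes. That is exactly the content of the corollary stated right after the theorem without proof (the $P\times\{0\}$ and $\{0\}\times P$ descriptions).

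The paper's route is shorter given the cited Hamed--Malek results. It never needs a description of all $S$-primes, only the single inclusion $P\subseteq eR$.
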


\begin{proof}
(1) Let $P$ be a minimal $S$-prime ideal.
By \cite[Proposition~2]{Hamed}, $sP$ is $S$-prime for every $s \in S$.
Since $sP \subseteq P$, minimality forces $sP=P$.
Consequently $P \subseteq sR$ for every $s \in S$, and therefore
$P \subseteq \bigcap_{s \in S} sR.$

(2) Assume that $S$ is strongly multiplicative and $S \nsubseteq U(R)$.
Let $P$ be an $S$-minimal prime ideal, and suppose that $P$ is prime.
By (1), $P \subseteq \bigcap_{s \in S} sR.$
Choose a least element $t \in S$ and write $t=ue$ as in Lemma~\ref{lem:idempotent}.
Then $P \subseteq tR=eR.$
Since $S \nsubseteq U(R)$, the idempotent $e$ is nontrivial.
Because $e(1-e)=0 \in P$ and $P$ is prime, either $e \in P$ or $1-e \in P$.
The second option is impossible because $1-e \in P \subseteq eR$ would imply $e=1$.
Hence $e \in P$, and therefore $t=ue \in P \cap S,$ a contradiction.
Thus no $S$-minimal prime is prime when $S \nsubseteq U(R)$.

If $S \subseteq U(R)$, then the definition of $S$-prime reduces to the usual definition of prime ideal, because the distinguished element $s \in S$ is invertible.

(3) The existence of minimal $S$-primes under strong multiplicativity is proved in \cite[Proposition~5]{Hamed}.
The non-primality follows from (2).

(4) Assume first that $I+\bigcap_{s \in S} sR=R.$
Choose $x \in \bigcap_{s \in S} sR$ with $1-x \in I$.
Passing to the quotient ring $R/I$, we obtain
\[
1+I=x+I \in \bigcap_{\overline s \in \overline S}\overline s(R/I),
\]
where $\overline S$ denotes the image of $S$ in $R/I$.
Hence every element of $\overline S$ divides $1+I$, so $\overline S \subseteq U(R/I)$.
Therefore every $\overline S$-prime ideal of $R/I$ is an ordinary prime ideal.
By \cite[Proposition~3]{Hamed}, $\overline S$-prime ideals of $R/I$ correspond exactly to $S$-prime ideals of $R$ containing $I$.
Thus every $S$-prime ideal minimal over $I$ is an ordinary minimal prime over $I$.

Assume now that $I+\bigcap_{s \in S} sR \neq R.$
By Corollary~\ref{cor:factor}, the image $\overline S$ of $S$ in $R/I$ is strongly multiplicative.
Moreover,
\[
\bigcap_{\overline s \in \overline S}\overline s(R/I)
=
\frac{I+\bigcap_{s \in S}sR}{I}
\neq R/I,
\]
so $\overline S \nsubseteq U(R/I)$.
Applying (3) in the quotient ring shows that every $\overline S$-minimal prime of $R/I$ is non-prime.
By \cite[Proposition~3]{Hamed}, these correspond exactly to the $S$-prime ideals of $R$ minimal over $I$.
Hence every $S$-prime ideal minimal over $I$ is non-prime.
\end{proof}

\begin{corollary}
Let $R=R_1\times R_2$, where $R_1$ and $R_2$ are not fields, and let $S$ be a strongly multiplicative set of $R$ such that $S\not\subseteq U(R)$. Then $R$ has $S$-minimal prime ideals, and none of them is a prime ideal of $R$. Moreover, if $\overline{S}=U(R_1)\times R_2$, then every $S$-minimal prime ideal of $R$ is of the form $P\times \{0\}$, where $P$ is a minimal prime ideal of $R_1$. Dually, if $\overline{S}=R_1\times U(R_2)$, then every $S$-minimal prime ideal of $R$ is of the form $\{0\}\times P$, where $P$ is a minimal prime ideal of $R_2$.
\end{corollary}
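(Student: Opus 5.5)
The first assertion comes straight from Theorem~\ref{thm:sminimal}(3), which applies because $S$ is strongly multiplicative and $S\not\subseteq U(R)$. For the explicit description, assume $\overline S=U(R_1)\times R_2$; the other case is symmetric. By Theorem~\ref{thm:sat}, the idempotent attached to $S$ is then $e=(1,0)$.

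Since $\overline S$ is the saturation, every $s\in S$ has the form $s=(s_1,s_2)$ with $s_1\in U(R_1)$. By Lemma~\ref{lem:idempotent}, a least element of $S$ is $t=ue$ with $u$ a unit. After multiplying by a unit, we may take $(1,0)$ as a generator of $tR=\bigcap_{s\in S}sR=R_1\times\{0\}$. Theorem~\ref{thm:sminimal}(1) then places every $S$-minimal prime $\mathcal P$ inside $R_1\times\{0\}$, so $\mathcal P=P\times\{0\}$ for some ideal $P$ of $R_1$. The reverse containment is immediate: $sR\supseteq R_1\times\{0\}$ because $s_1$ is a unit.

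Next I would characterize $S$-primality of ideals $P\times\{0\}$. Suppose $P$ is a prime of $R_1$. With $s=t=(1,0)$, take $(a_1,a_2)(b_1,b_2)\in P\times 0$. Then $a_1b_1\in P$, so $a_1\in P$ or $b_1\in P$, and hence $ta$ or $tb$ lies in $P\times 0$. The ideal is also disjoint from $S$, since first coordinates of elements of $S$ are units and $P\neq R_1$. So $P\times\{0\}$ is $S$-prime.

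Conversely, suppose $P\times\{0\}$ is $S$-prime with witness $s$, and $a_1b_1\in P$. Apply the condition to $(a_1,0)(b_1,0)$. This gives $s_1a_1\in P$ or $s_1b_1\in P$, and since $s_1$ is a unit, $a_1\in P$ or $b_1\in P$. Also $P\ne R_1$, because $(1,0)\cdot$unit lies in $S$. So $P$ is prime.

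Hence the $S$-primes contained in $R_1\times 0$ are exactly the ideals $P\times 0$ with $P\in\Spec(R_1)$. Any $S$-prime $\mathcal Q$ contains $(1,0)\mathcal Q$, which is again $S$-prime by \cite[Proposition~2]{Hamed} and lies in $R_1\times 0$. Therefore minimality among all $S$-primes is the same as minimality among these, so the $S$-minimal primes are $P\times\{0\}$ with $P$ a minimal prime of $R_1$. These are non-prime in $R$, consistent with Theorem~\ref{thm:sminimal}(2).

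The main obstacle is the step that transfers minimality, namely checking that every $S$-prime contains one of the form $P\times 0$. This is handled by $(1,0)\mathcal Q$. The hypothesis that $R_1,R_2$ are not fields appears inessential to this argument; I would note this rather than use it.
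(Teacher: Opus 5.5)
Your proof is correct. It is the natural derivation from Theorem~\ref{thm:sat} and Theorem~\ref{thm:sminimal}(1),(3); the paper states this corollary without proof, so there is no separate argument to compare against. You are also right that the hypothesis that $R_1$ and $R_2$ are not fields is never used.

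The one slip is that $(1,0)$ need not lie in $S$. For example, take $S=\{(1,1)\}\cup\{(2^n,0):n\geq 1\}$ in $\mathbb{Q}[X]\times R_2$. So the witness for the $S$-primality of $P\times\{0\}$, and the multiplier in \cite[Proposition~2]{Hamed}, should be the least element $t=(u_1,0)\in S$. Everything then goes through unchanged, since $u_1\in U(R_1)$ and $t\mathcal Q=(1,0)\mathcal Q$.
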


\section{Strong Krull separation and maximal correspondence}

Classical Krull separation guarantees prime ideals disjoint from a multiplicative set, but not maximal ideals.
For instance, if $R=k[X,Y]$ and $S=R\setminus (X)$, where $k$ is a field and $X,Y$ are indeterminates over $k$, then $(X)$ is maximal among ideals disjoint from $S$, yet it is not a maximal ideal of $R$. Here, note that $S$ is not strongly multiplicative since $(X)$ is not a strongly prime ideal of $R$. For strongly multiplicative sets the situation improves drastically.

\begin{theorem}[Strong Krull separation]\label{thm:strongkrull}
Let $S$ be a strongly multiplicative set of $R$, and let $I$ be an ideal with $I \cap S=\varnothing$.
Then there exists a maximal ideal of $R$ containing $I$ and disjoint from $S$.
Moreover, every ideal maximal with respect to containing $I$ and being disjoint from $S$ is a maximal ideal of $R$.
\end{theorem}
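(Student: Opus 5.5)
The plan is to reduce everything to the idempotent $e$ supplied by Lemma~\ref{lem:idempotent}. Let $t \in S$ be a least element and write $t=ue$ with $u \in U(R)$ and $e^2=e$. The key observation is that for any ideal $J$ of $R$,
\[
J \cap S \neq \varnothing \iff t \in J \iff e \in J.
\]
This is exactly the Gabriel filter description in Theorem~\ref{thm:torsion}(2). Indeed, if $s \in J \cap S$, then $s \mid t$ gives $t \in J$ and hence $e=u^{-1}t \in J$. Conversely, $e \in J$ gives $t=ue \in J \cap S$. So the condition ``disjoint from $S$'' is the same as ``does not contain $e$''.

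For the existence statement, note that $I \cap S=\varnothing$ means $e \notin I$. Pass to $R/I$ and consider the image $\bar e$, which is an idempotent.

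I would then show that $I+(1-e)R \neq R$. If $1=x+(1-e)r$ with $x \in I$, multiplying by $e$ gives $e=ex \in I$, which is a contradiction. So $I+(1-e)R$ is a proper ideal and is contained in some maximal ideal $\mathfrak m$.

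Since $1-e \in \mathfrak m$ and $\mathfrak m$ is proper, we get $e \notin \mathfrak m$; otherwise $1=e+(1-e)\in\mathfrak m$. Hence $\mathfrak m \cap S=\varnothing$ and $I \subseteq \mathfrak m$, which proves the first claim.

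For the second claim, let $M$ be maximal among ideals containing $I$ and disjoint from $S$, so $e \notin M$. The ideal $M+(1-e)R$ contains $I$, and by the same computation as above it does not contain $e$: if $e=m+(1-e)r$ with $m \in M$, then multiplying by $e$ gives $e=em\in M$. So $M+(1-e)R$ is disjoint from $S$, and maximality forces $1-e \in M$.

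Now let $N \supsetneq M$ be any strictly larger ideal. By maximality of $M$, $N$ meets $S$, so $e \in N$. Together with $1-e \in M \subseteq N$ this gives $1 \in N$, so $N=R$. Hence $M$ is a maximal ideal of $R$.

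There is no real obstacle here. The only step needing care is the translation $J\cap S\neq\varnothing \iff e\in J$, which rests on the least element and the unit $u$ from Lemma~\ref{lem:idempotent}. Once that is in place, both claims reduce to the fact that in $R$ the ideals not containing $e$ are controlled by the decomposition $R \cong Re \times R(1-e)$. Alternatively, one could cite Zorn's lemma to get some $M$ maximal with respect to being disjoint from $S$, but the direct argument above already yields existence without it beyond the standard existence of maximal ideals.
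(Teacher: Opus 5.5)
Your proof is correct. For the existence claim it is the paper's argument in different clothing. The paper takes a maximal ideal of $Re$ containing $Ie$ and pulls it back along $r\mapsto re$. The maximal ideals obtained this way are exactly the maximal ideals of $R$ containing $I+(1-e)R$. Likewise, properness of $Ie$ in $Re$ is equivalent to your check that $I+(1-e)R\neq R$.

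The ``moreover'' part is where you genuinely diverge. The paper invokes the classical Krull lemma to know that $P$ is prime, and uses primality together with $e(1-e)=0$ to get $1-e\in P$. It then shows $R/P$ is a field: for $x\notin P$ it takes $s\in S\cap(P+Rx)$ and notes that $\bar s$ is a unit because $s\mid t$ and $\bar t=\bar u$.

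You instead make the criterion $J\cap S\neq\varnothing \iff e\in J$ (the filter description of Theorem~\ref{thm:torsion}(2), which you re-derive) the single engine. Since $M+(1-e)R$ still avoids $e$, maximality puts $1-e$ in $M$ directly. Any strictly larger ideal then contains both $e$ and $1-e$, so it is all of $R$. This gives a shorter, self-contained argument that never needs primality or passage to $R/P$. It also makes clear that the statement is really about ideals avoiding a single idempotent. The paper's route, working through $R_S\cong Re$ and $R/P$, stays closer to the classical Krull picture and sets up the maximal correspondence of Theorem~\ref{thm:maxcorr}.

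Your sentence about passing to $R/I$ and the idempotent $\bar e$ is never used and can be dropped.
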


\begin{proof}
Choose an idempotent $e$ such that $R_S \cong Re$.
Since $I \cap S=\varnothing$, the ideal $Ie$ is proper in $Re$; otherwise $e \in I$, and then a least element $t=ue$ of $S$ would lie in $I \cap S$.
Choose a maximal ideal $N$ of $Re$ containing $Ie$, and let $M$ be its inverse image in $R$.
Then $M$ is a maximal ideal of $R$, it contains $I$, and $M \cap S=\varnothing$ because every $s \in S$ maps to a unit in $Re$.
This proves existence.

Now let $P$ be maximal with respect to containing $I$ and being disjoint from $S$.
By the classical Krull separation lemma, $P$ is prime.
Let $t=ue$ be a least element of $S$.
Since $t \notin P$, also $e \notin P$.
From $e(1-e)=0 \in P$ and primality of $P$, we get $1-e \in P$.

Take any $x \notin P$.
By maximality of $P$, the ideal $P+Rx$ meets $S$, so choose $s \in S \cap (P+Rx)$.
Modulo $P$ one has $\overline s=\overline r\,\overline x$ for some $\overline r \in R/P$.
Because every element of $S$ divides $t$ and $t$ maps to $\overline u$, a unit of $R/P$, the class $\overline s$ is a unit in $R/P$.
Hence $\overline x$ is a unit as well.
Therefore every nonzero class in $R/P$ is invertible, so $R/P$ is a field and $P$ is maximal.
\end{proof}

Disjoint maximal ideals need not be strongly prime, even in the strongly multiplicative setting.

\begin{example}\label{ex:max-not-strong}
Let $A=C[0,1]$ be the ring of real-valued continuous functions on $[0,1]$, and let
\[
M_r=\{f \in C[0,1] : f(r)=0\}
\]
for $r \in [0,1]$.
Each maximal ideal of $A$ is of this form \cite{Atiyah}.
Set $R=A \times \mathbb{R}$ and
\[
S=\{(1,0),(1,1)\}.
\]
Then $S$ is strongly multiplicative.
Any maximal ideal of $R$ disjoint from $S$ must be of the form $M_r \times \mathbb{R}$.
However, such an ideal is not strongly prime:
if $M_i^\ast=M_i \times \mathbb{R}$, then
\[
\bigcap_{i \neq r} M_i^\ast=0 \times \mathbb{R}\subseteq M_r^\ast,
\]
while no $M_i^\ast$ with $i \neq r$ is contained in $M_r^\ast$.
\end{example}

\begin{corollary}\label{cor:maxunion}
If $S$ is saturated and strongly multiplicative, then $R \setminus S$ is a union of maximal ideals of $R$.
\end{corollary}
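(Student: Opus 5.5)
We need to show that every $x \in R \setminus S$ lies in a maximal ideal of $R$ that is disjoint from $S$, and conversely that every maximal ideal disjoint from $S$ lies in $R\setminus S$.

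The second inclusion is immediate: a maximal ideal $M$ with $M\cap S=\varnothing$ satisfies $M\subseteq R\setminus S$. So the union of all maximal ideals disjoint from $S$ is contained in $R\setminus S$.

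For the other inclusion, fix $x\in R\setminus S$ and apply Theorem~\ref{thm:strongkrull} to the ideal $I=Rx$. The one thing to check is the hypothesis $Rx\cap S=\varnothing$, and this is exactly where saturation is used. Suppose $rx\in S$ for some $r\in R$. Then $x$ divides an element of $S$, so $x\in\overline S=S$, contradicting the choice of $x$. Hence $Rx\cap S=\varnothing$. Theorem~\ref{thm:strongkrull} then gives a maximal ideal $M$ of $R$ with $x\in M$ and $M\cap S=\varnothing$.

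Combining the two inclusions,
\[
R\setminus S=\bigcup\{M\in\Max(R): M\cap S=\varnothing\}.
\]

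The only substantive step is verifying the disjointness hypothesis through saturation; everything else is a direct application of Theorem~\ref{thm:strongkrull}.

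Alternatively, Theorem~\ref{thm:sat} identifies $S$ with the preimage of $U(Re)$. From there one can see that $R\setminus S$ is the union of the contractions of the maximal ideals of $Re$, which gives the same conclusion.
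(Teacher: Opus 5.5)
Your proof is correct and is essentially the paper's argument: both rest on Theorem~\ref{thm:strongkrull} together with saturation. The paper first uses the standard identity $R\setminus S=\bigcup_{\mathfrak p\cap S=\varnothing}\mathfrak p$ for saturated $S$ and then enlarges each such prime to a maximal ideal disjoint from $S$, whereas you apply Theorem~\ref{thm:strongkrull} directly to $I=Rx$, checking $Rx\cap S=\varnothing$ via saturation (which is exactly how that standard identity is proved).
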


\begin{proof}
For every saturated multiplicative set,
$R \setminus S=\bigcup_{\mathfrak p \cap S=\varnothing}\mathfrak p.$
By Theorem~\ref{thm:strongkrull}, each prime ideal disjoint from $S$ is contained in a maximal ideal disjoint from $S$.
Hence
$R \setminus S=\bigcup_{M \cap S=\varnothing} M.$
\end{proof}

The idempotent description of $R_S$ also yields a clean description of maximal ideals after localization.

\begin{theorem}[Maximal correspondence]\label{thm:maxcorr}
Let $S$ be a strongly multiplicative set of $R$.
Then extension and contraction induce a bijection between
\[
\Max(R_S)
\qquad\text{and}\qquad
\{M \in \Max(R) : M \cap S=\varnothing\}.
\]
\end{theorem}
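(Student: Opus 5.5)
The plan is to start from the standard prime correspondence for localization: extension and contraction give an inclusion-preserving bijection between $\Spec(R_S)$ and $D(S)=\{\mathfrak p\in\Spec(R):\mathfrak p\cap S=\varnothing\}$. Restricting this bijection to maximal elements, $\Max(R_S)$ corresponds to the primes that are maximal among primes disjoint from $S$. The proof then reduces to one claim: for strongly multiplicative $S$, a prime $\mathfrak p$ disjoint from $S$ is maximal among such primes if and only if $\mathfrak p\in\Max(R)$.

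\textbf{Step 1: maximal ideals of $R$ disjoint from $S$ are maximal in $D(S)$.} If $M\in\Max(R)$ and $M\cap S=\varnothing$, then $M$ is certainly maximal in $D(S)$. Hence $M_S$ is a maximal ideal of $R_S$, with contraction $M$.

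\textbf{Step 2: primes maximal in $D(S)$ are maximal ideals of $R$.} Suppose $\mathfrak p$ is maximal among primes disjoint from $S$. I first show $\mathfrak p$ is maximal among all ideals disjoint from $S$. If $\mathfrak p\subsetneq J$ with $J\cap S=\varnothing$, enlarge $J$ by Zorn to an ideal maximal with respect to being disjoint from $S$. By classical Krull separation this ideal is prime, which contradicts the maximality of $\mathfrak p$ in $D(S)$. The second assertion of Theorem~\ref{thm:strongkrull}, applied with $I=\mathfrak p$, then gives $\mathfrak p\in\Max(R)$.

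As an alternative, I can argue directly through Theorem~\ref{thm:main}. There $R_S\cong Re$, and the localization map becomes $r\mapsto re$, which is surjective. So every maximal ideal of $R_S\cong Re$ contracts to a maximal ideal of $R$, since $R/\mathfrak m^c\cong Re/\mathfrak m$ is a field. I can use this as a cross-check.

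\textbf{Step 3: the maps are mutually inverse.} For $M\in\Max(R)$ disjoint from $S$, we have $(M_S)^c=M$ because $M$ is prime and disjoint from $S$. For $\mathfrak m\in\Max(R_S)$, we have $(\mathfrak m^c)_S=\mathfrak m$ by the general localization correspondence.

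The only real obstacle is Step 2, that is, checking that maximality among primes in $D(S)$ upgrades to maximality among all ideals disjoint from $S$, so that Theorem~\ref{thm:strongkrull} applies. I expect the Zorn/Krull argument above to handle it cleanly. The surjectivity of $R\to Re$ gives an independent route if needed.
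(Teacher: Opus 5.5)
Your proof is correct, but it is organized differently from the paper's. The paper does not use the general prime correspondence or Theorem~\ref{thm:strongkrull}. It takes $R_S\cong Re$ and observes that $r\mapsto re$ is a surjection with kernel $(1-e)R$, so $\Max(Re)$ corresponds to the maximal ideals of $R$ containing $1-e$. It then uses $e(1-e)=0$ together with $t=ue$ to show that a maximal ideal contains $1-e$ exactly when it misses $S$. Your cross-check via surjectivity of $r\mapsto re$ is essentially this mechanism.

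Your main route restricts the order isomorphism $\Spec(R_S)\cong D(S)$ to maximal elements. This reduces everything to the claim that a prime maximal in $D(S)$ is a maximal ideal of $R$, which you get from the Zorn/Krull upgrade plus the second assertion of Theorem~\ref{thm:strongkrull}. The upgrade is valid: the prime produced by Zorn strictly contains $\mathfrak p$ and still misses $S$, contradicting maximality of $\mathfrak p$ in $D(S)$.

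The trade-off is as follows. The paper's argument is shorter and self-contained at the level of idempotents. Yours leans on a previously proved theorem, but it shows exactly where strong multiplicativity enters: for general $S$, e.g.\ $R=k[X,Y]$ with $S=R\setminus(X)$, maximal elements of $D(S)$ need not be maximal ideals. Your route also makes it automatic that the bijection is given by extension and contraction. The paper leaves that point implicit in the phrase ``because $R_S\cong Re$''; it holds because the composite $R\to R_S\cong Re$ is $r\mapsto re$.
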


\begin{proof}
Choose an idempotent $e$ with $R_S \cong Re$.
The map $R \to Re$, $r \mapsto re$, is surjective with kernel $(1-e)R$, so maximal ideals of $Re$ correspond to maximal ideals of $R$ that contain $1-e$.
We claim that these are exactly the maximal ideals of $R$ disjoint from $S$.

Let $M$ be maximal and contain $1-e$.
If $M$ met $S$, pick $s \in M \cap S$.
A least element $t=ue$ of $S$ is divisible by $s$, so $t \in M$, whence $e=u^{-1}t \in M$.
Together with $1-e \in M$, this gives $1 \in M$, a contradiction.
Thus $M \cap S=\varnothing$.

Conversely, let $M$ be maximal with $M \cap S=\varnothing$.
Then $t=ue \notin M$, so $e \notin M$.
Since $M$ is prime and $e(1-e)=0$, we must have $1-e \in M$.
Therefore $M$ contains $(1-e)R$.

Thus maximal ideals of $Re$ are exactly the maximal ideals of $R$ disjoint from $S$.
Because $R_S \cong Re$, the result follows.
\end{proof}

\section{The regular \texorpdfstring{$m$}{m}-complement operator}

We now restrict to regular elements.
Write
\[
\Reg(R)=\{r \in R : rx=0 \Rightarrow x=0\}.
\]
Thus $\Reg(R)$ is the set of non-zerodivisors of $R$.
For a nonempty subset $S \subseteq \Reg(R)$ define
\[
N_R(S)=\{x \in \Reg(R) : xR \cap sR=xsR \text{ for all } s \in S\}.
\]
When $R$ is an integral domain, this is the Anderson--Chang $m$-complement \cite{AndersonChang}.

\begin{proposition}\label{prop:mcomp}
Let $R$ be a commutative ring.
For nonempty subsets $S,S_1,S_2 \subseteq \Reg(R)$ and any family $(S_\alpha)_\alpha$ of nonempty subsets of $\Reg(R)$, the following hold.
\begin{enumerate}
\item $N_R(S)$ is a saturated multiplicative subset of $\Reg(R)$.
\item If $S_1 \subseteq S_2$, then $N_R(S_2)\subseteq N_R(S_1)$.
\item $S \cap N_R(S)\subseteq U(R)$.
\item $S \subseteq N_R(N_R(S))$.
\item $N_R(N_R(N_R(S)))=N_R(S)$.
\item
\[
N_R\Bigl(\bigcup_\alpha S_\alpha\Bigr)=\bigcap_\alpha N_R(S_\alpha).
\]
\item
\[
N_R(S_1S_2)=N_R(S_1)\cap N_R(S_2),
\]
where $S_1S_2=\{s_1s_2 : s_i \in S_i\}$.
\end{enumerate}
\end{proposition}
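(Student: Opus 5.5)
The whole proof rests on one reformulation of the defining condition. For $x,s\in\Reg(R)$, write $C(x,s)$ for the condition $xR\cap sR=xsR$. The inclusion $xsR\subseteq xR\cap sR$ always holds. An element of $xR\cap sR$ has the form $xy$ with $xy\in sR$. It lies in $xsR$ exactly when $xy=xsz$ for some $z$, and since $x$ is regular this is equivalent to $y\in sR$. Hence
\[
C(x,s)\iff (sR:x)=sR,
\]
that is, multiplication by $x$ is injective on $R/sR$. The defining condition is visibly symmetric in $x$ and $s$, so $C(x,s)$ also says that $s$ acts injectively on $R/xR$. I will use both descriptions throughout.

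The key auxiliary lemma concerns $a,b,c\in\Reg(R)$ and reads:
\[
c \text{ is injective on } R/abR \iff c \text{ is injective on } R/aR \text{ and on } R/bR.
\]
By symmetry, the claim $C(c,ab)\iff C(c,a)\wedge C(c,b)$ is the same as saying that $ab$ acts injectively on $R/cR$ if and only if $a$ and $b$ both do.
\begin{itemize}
\item If $a$ and $b$ are injective on $R/cR$, then so is their composite $ab$.
\item Conversely, suppose $ab$ is injective on $R/cR$. If $az\in cR$, then $abz\in cR$, so $z\in cR$; thus $a$ is injective, and the same argument works for $b$.
\end{itemize}
Checking that the symmetric reformulation is applied correctly in both directions is the step I expect to need the most care, since everything else reduces to it.

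With the lemma, the items follow.

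For (1): $1\in N_R(S)$ trivially, and $0\notin N_R(S)$ because $0$ is not regular, as $1\neq 0$. Products of regular elements are regular, and the lemma applied with the roles of $x$ and $s$ swapped shows that $x,y\in N_R(S)$ implies $xy\in N_R(S)$. For saturation, a divisor of a regular element is regular, and the converse half of the lemma gives the rest.

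Item (2) is immediate from the definition.

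For (3): if $s\in S\cap N_R(S)$, then $C(s,s)$ holds, so $s$ is injective on $R/sR$. Since $s\cdot 1\in sR$, this gives $1\in sR$, so $s$ is a unit.

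For (4): if $s\in S$ and $x\in N_R(S)$, then $C(x,s)$ holds, and by symmetry $C(s,x)$ holds. This for all $x$ gives $s\in N_R(N_R(S))$.

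Item (5) is the usual Galois-connection argument.
\begin{itemize}
\item Applying (4) to $N_R(S)$ gives $N_R(S)\subseteq N_R^3(S)$.
\item Applying (2) to the inclusion $S\subseteq N_R^2(S)$ gives the reverse inclusion.
\end{itemize}

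Item (6) is immediate, because membership in $N_R(\cdot)$ is a condition imposed separately for each $s$ in the set.

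Item (7) is exactly the lemma applied to every pair $s_1\in S_1$, $s_2\in S_2$. One cannot simply invoke (6) here, because $S_1S_2$ need not contain $S_1$ or $S_2$ when $1\notin S_i$. So:
\begin{itemize}
\item $x\in N_R(S_1S_2)$ means $C(x,s_1s_2)$ holds for all $s_1\in S_1$ and $s_2\in S_2$, which by the lemma says $C(x,s_1)$ and $C(x,s_2)$ hold for all such pairs.
\item Since $S_1$ and $S_2$ are nonempty, every $s_1\in S_1$ and every $s_2\in S_2$ occurs in some pair. This condition is therefore equivalent to $x\in N_R(S_1)\cap N_R(S_2)$.
\end{itemize}
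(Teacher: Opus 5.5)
Your proof is correct. Items (2), (4), (5) and (6) go exactly as in the paper, but for (1), (3) and (7) you take a different, more unified route.

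The paper proves multiplicativity, saturation, and each inclusion of (7) by a separate element chase: take $w$ in the intersection, write it in two ways, and cancel a regular factor. You instead first rewrite $xR\cap sR=xsR$ as $(sR:x)=sR$, i.e.\ $x$ is a nonzerodivisor on $R/sR$, and then use that the condition is symmetric in $x$ and $s$. This reduces multiplicativity, saturation and (7) to one fact: a product of commuting multiplication maps on $R/cR$ is injective if and only if each factor is. It also makes (3) immediate, since injectivity of $s$ on $R/sR$ applied to $s\cdot 1\in sR$ forces $R=sR$.

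Unwinding the paper's computations gives the same cancellations, so the mathematical content is the same. What your packaging adds is that it exposes the common mechanism: membership in $N_R(S)$ is a symmetric, weak regular-sequence condition on each pair $x,s$. In particular, it justifies the symmetry the paper uses tacitly when it cites ``the multiplicativity argument already used above'' for the forward inclusion of (7). The paper's version, by contrast, is self-contained and needs no reformulation.

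You also make explicit a few small points the paper leaves implicit:
\begin{itemize}
\item $1\in N_R(S)$ and $0\notin N_R(S)$;
\item divisors of regular elements are regular, which is needed before cancelling in the saturation step;
\item nonemptiness of $S_1,S_2$ is what makes (7) work, since $S_1S_2$ need not contain either factor set and (6) cannot simply be invoked.
\end{itemize}
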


\begin{proof}
We first prove (1).
Let $x,y \in N_R(S)$ and fix $s \in S$.
The inclusion $xysR \subseteq xyR \cap sR$ is clear.
Conversely, if $w \in xyR \cap sR$, then $w \in xR \cap sR=xsR$, say $w=xsa$.
Also $w=xyb$ for some $b$.
Since $x$ is regular, cancellation gives $sa=yb$.
Now $sa \in yR \cap sR=ysR$, so $sa=ysc$ for some $c$.
Hence
\[
w=xsa=xysc \in xysR.
\]
Thus $xyR \cap sR=xysR$, and $xy \in N_R(S)$.

To show saturation, suppose $xy \in N_R(S)$.
Fix $s \in S$ and let $w \in xR \cap sR$.
Then
\[
yw \in xyR \cap sR=xysR,
\]
so $yw=xysc$ for some $c$.
Since $y$ is regular, cancellation gives $w=xsc$, and therefore $xR \cap sR=xsR$.
Hence $x \in N_R(S)$, and similarly $y \in N_R(S)$.

Statement (2) is immediate from the definition, and (6) follows by unpacking the universal quantifier.

For (3), let $s \in S \cap N_R(S)$.
Then
\[
sR=sR \cap sR=s^2R.
\]
Thus $s=s^2u$ for some $u \in R$.
Since $s$ is regular, cancellation yields $1=su$, so $s$ is a unit.

For (4), let $s \in S$ and $x \in N_R(S)$.
By definition,
\[
xR \cap sR=xsR=sxR,
\]
which says exactly that $s \in N_R(N_R(S))$.

For (5), apply (2) to the inclusion from (4) to obtain
\[
N_R(N_R(N_R(S)))\subseteq N_R(S).
\]
Applying (4) to $N_R(S)$ gives the reverse inclusion.

Finally, we prove (7).
If $x \in N_R(S_1)\cap N_R(S_2)$ and $s_i \in S_i$, then the multiplicativity argument already used above gives
\[
xR \cap s_1s_2R=xs_1s_2R,
\]
so $x \in N_R(S_1S_2)$.

Conversely, let $x \in N_R(S_1S_2)$, fix $s_1 \in S_1$, and choose $s_2 \in S_2$.
If $w \in xR \cap s_1R$, then
\[
s_2w \in xR \cap s_1s_2R=xs_1s_2R,
\]
so $s_2w=xs_1s_2c$ for some $c$.
Since $s_2$ is regular, cancellation gives $w=xs_1c$.
Thus $xR \cap s_1R=xs_1R$, and $x \in N_R(S_1)$.
By symmetry, $x \in N_R(S_2)$.
\end{proof}

\begin{theorem}\label{thm:mcomp}
Let $S \subseteq \Reg(R)$ be a nonempty multiplicative subset.
The following are equivalent.
\begin{enumerate}
\item $S$ is strongly multiplicative.
\item Every element of $S$ is a unit.
\item $S \subseteq N_R(S)$.
\item $N_R(S)=\Reg(R)$.
\end{enumerate}
If, in addition, $S$ is saturated, then these are also equivalent to
\begin{enumerate}
\setcounter{enumi}{4}
\item $S=U(R)$.
\end{enumerate}
When these conditions hold, $R_S \cong R$.
In particular, for every family of ideals $(I_\lambda)_{\lambda \in \Lambda}$,
\[
\Bigl(\bigcap_{\lambda \in \Lambda} I_\lambda\Bigr)_S
=
\bigcap_{\lambda \in \Lambda} (I_\lambda)_S
=
\bigcap_{\lambda \in \Lambda} I_\lambda.
\]
\end{theorem}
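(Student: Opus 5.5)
I will prove the cycle (1)$\Rightarrow$(2)$\Rightarrow$(4)$\Rightarrow$(3)$\Rightarrow$(2)$\Rightarrow$(1), and then handle (5) and the closing assertions.

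For (1)$\Rightarrow$(2), let $t$ be a least element of $S$ (Proposition~\ref{prop:minimal}) and write $t=ue$ as in Lemma~\ref{lem:idempotent}. Since $t$ is regular and $t(1-e)=ue(1-e)=0$, we get $1-e=0$, so $e=1$ and $t=u$ is a unit. Every $s\in S$ divides $t$, hence is a unit. Alternatively, $t^2\in S$ forces $t=t^2a$, and cancelling the regular element $t$ gives $ta=1$, so $t$ is a unit directly.

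For (2)$\Rightarrow$(4), take a unit $s$ and any $x\in\Reg(R)$. Then $sR=R$, so $xR\cap sR=xR=xsR$, which gives $x\in N_R(S)$. Hence $N_R(S)=\Reg(R)$.

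For (4)$\Rightarrow$(3), note that $S\subseteq\Reg(R)=N_R(S)$.

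For (3)$\Rightarrow$(2), Proposition~\ref{prop:mcomp}(3) gives $S=S\cap N_R(S)\subseteq U(R)$.

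For (2)$\Rightarrow$(1), any $s\in S$ is a unit, so $1\in S$ is a least element and Proposition~\ref{prop:minimal} applies. (Example~\ref{ex1}(1) gives the same conclusion.)

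If $S$ is saturated, (2) gives $S\subseteq U(R)$. Conversely, every unit divides $1\in S$, so saturation gives $U(R)\subseteq S$. Thus (2) is equivalent to (5).

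For the final claims, when $S\subseteq U(R)$ the localization map $R\to R_S$ is an isomorphism. This is the standard fact, also recorded in the proof of Corollary~\ref{cor:indecomp}. Under this identification $(I_\lambda)_S=I_\lambda$, and the displayed equalities follow from Theorem~\ref{thm:main}(2), or trivially.

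The only point needing care is (1)$\Rightarrow$(2): regularity of $S$ must be used to kill the idempotent part. Everything else is unwinding definitions and citing Proposition~\ref{prop:mcomp}.
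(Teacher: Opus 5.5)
Your proof is correct and follows the paper's argument essentially step for step: the cancellation $t=t^2a \Rightarrow ta=1$ for (1)$\Rightarrow$(2) (your idempotent-based variant is an equally valid alternative), the computation $xR\cap sR=xR=xsR$ for (2)$\Rightarrow$(4), and Proposition~\ref{prop:mcomp}(3) for (3)$\Rightarrow$(2). You also close the loop explicitly with (2)$\Rightarrow$(1), taking $1$ as a least element; the paper's proof leaves this step implicit, so your write-up is slightly more complete.
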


\begin{proof}
Assume that $S$ is strongly multiplicative, and let $t \in S$ be a least element with respect to inclusion of principal ideals.
Since $t^2 \in S$, we have $Rt\subseteq Rt^2$, so $t=t^2u$ for some $u \in R$.
Because $t$ is regular, cancellation gives $1=tu$, and therefore $t$ is a unit.
Now every $s \in S$ divides $t$, so every $s$ is a unit.
Thus (1) implies (2).

If every element of $S$ is a unit and $x \in \Reg(R)$, then for every $s \in S$,
\[
xR \cap sR=xR=xsR.
\]
Hence $x \in N_R(S)$, so $N_R(S)=\Reg(R)$.
Thus (2) implies (4), and (4) trivially implies (3).

If $S \subseteq N_R(S)$, then $S \subseteq S \cap N_R(S)$, so Proposition~\ref{prop:mcomp}(3) shows that every element of $S$ is a unit.
Hence (3) implies (2).

If $S$ is saturated and every element of $S$ is a unit, then $S=U(R)$.
Indeed, if $u \in S$ and $v \in U(R)$, then $u=(uv^{-1})v$, so saturation forces $v \in S$.
Thus (2) and (5) are equivalent.

Under any of these conditions, every element of $S$ is a unit.
Hence localization at $S$ is the identity.
\end{proof}

\begin{corollary}\label{cor:mcomp}
Let $S \subseteq \Reg(R)$ be a nonempty multiplicative subset.
The following are equivalent.
\begin{enumerate}
\item Localization at $S$ commutes with arbitrary intersections of ideals.
\item $S$ is strongly multiplicative.
\item Every element of $S$ is a unit.
\item $N_R(S)=\Reg(R)$.
\end{enumerate}
In particular, every nontrivial strongly multiplicative localization must invert a zero divisor.
\end{corollary}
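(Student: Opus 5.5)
The corollary follows by combining Theorem~\ref{thm:main} with Theorem~\ref{thm:mcomp}; no new computation is needed.

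\emph{The main equivalences.} First, (1)$\Leftrightarrow$(2) is the equivalence (1)$\Leftrightarrow$(2) of Theorem~\ref{thm:main}, which holds for any multiplicative set. Second, (2)$\Leftrightarrow$(3)$\Leftrightarrow$(4) is exactly the equivalence (1)$\Leftrightarrow$(2)$\Leftrightarrow$(4) of Theorem~\ref{thm:mcomp}. That theorem applies because $S\subseteq\Reg(R)$ is a nonempty multiplicative subset. Chaining the two gives all four equivalences.

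\emph{The ``in particular'' clause.} I read ``nontrivial'' as meaning that $R\to R_S$ is not an isomorphism, equivalently $S\not\subseteq U(R)$. I would argue by contraposition. Suppose $S$ is strongly multiplicative and contains no zero divisors, so $S\subseteq \Reg(R)$ (note $0\notin S$). Then (3) shows that every element of $S$ is a unit, so $R_S\cong R$ and the localization is trivial.

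There is a possible gap here. If $s$ is a unit, then $s/1$ is a unit in $R_S$, so the localization is trivial in the strong sense. Conversely, if $R\to R_S$ is an isomorphism, then every $s\in S$ is a unit in $R$, so the two readings of ``trivial'' agree. Hence a nontrivial strongly multiplicative $S$ must contain some element outside $\Reg(R)$, i.e.\ a zero divisor, and that element becomes invertible in $R_S$.

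I could also give the direct reasoning without Theorem~\ref{thm:mcomp}. By Lemma~\ref{lem:idempotent}, the least element has the form $t=ue$. If $S$ is nontrivial, then $e\neq 1$, so $t(1-e)=0$ with $1-e\neq 0$. Thus $t$ is a zero divisor lying in $S$, and it is inverted. This gives a second proof of the final clause, and I would mention it as a remark.

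I expect no real obstacle. The only care needed is stating precisely what ``nontrivial'' means, and checking that the hypothesis $S\subseteq\Reg(R)$ is what licenses the use of Theorem~\ref{thm:mcomp}.
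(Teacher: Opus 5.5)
Your proposal is correct and is essentially the paper's proof: (1)$\Leftrightarrow$(2) comes from Theorem~\ref{thm:main}, (2)$\Leftrightarrow$(3)$\Leftrightarrow$(4) from Theorem~\ref{thm:mcomp}, and the final clause by contraposition, which you rightly read as a statement about an arbitrary strongly multiplicative $S$ (under the standing hypothesis $S\subseteq\Reg(R)$ no element of $S$ is a zero divisor). Your extra argument, which uses $t=ue$ with $e\neq 1$ and $t(1-e)=0$, is a valid independent proof of the last clause.
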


\begin{proof}
The equivalence of (1) and (2) comes from Theorem~\ref{thm:main}, while Theorem~\ref{thm:mcomp} identifies (2), (3), and (4).
If localization at $S$ is nontrivial, then (3) fails, so some element of $S$ is a zero divisor.
\end{proof}

\begin{corollary}\label{cor:domain}
Let $D$ be an integral domain and let $S \subseteq D \setminus \{0\}$ be a nonempty multiplicative subset.
The following are equivalent.
\begin{enumerate}
\item $S$ is strongly multiplicative.
\item Every element of $S$ is a unit.
\item $S \subseteq N_D(S)$.
\item $N_D(S)=D \setminus \{0\}$.
\end{enumerate}
If, in addition, $S$ is saturated, then these are also equivalent to
\begin{enumerate}
\setcounter{enumi}{4}
\item $S=U(D)=N_D(N_D(S))$.
\end{enumerate}
\end{corollary}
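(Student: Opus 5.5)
The plan is to specialize Theorem~\ref{thm:mcomp} to $R=D$. In an integral domain, $\Reg(D)=D\setminus\{0\}$, and every nonempty multiplicative subset $S\subseteq D\setminus\{0\}$ is a multiplicative subset of $\Reg(D)$. Consequently, conditions (1)--(4) of the corollary are literally conditions (1)--(4) of Theorem~\ref{thm:mcomp}, with $N_R(S)=\Reg(R)$ read as $N_D(S)=D\setminus\{0\}$, and their equivalence follows at once. As a sanity check, one can also recover (1)$\Leftrightarrow$(2) from Corollary~\ref{cor:indecomp}, since a domain is indecomposable.

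For (5) we assume $S$ is saturated. Theorem~\ref{thm:mcomp} already gives that (2) is equivalent to $S=U(D)$. It remains to show that, under these conditions, $N_D(N_D(S))=U(D)$ as well.

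\begin{itemize}
\item Assuming (2)--(4), we have $N_D(S)=D\setminus\{0\}$, so $N_D(N_D(S))=N_D(D\setminus\{0\})$.
\item Apply Proposition~\ref{prop:mcomp}(3) with the set $D\setminus\{0\}$ in place of $S$: every $x\in N_D(D\setminus\{0\})$ lies in $(D\setminus\{0\})\cap N_D(D\setminus\{0\})\subseteq U(D)$.
\item Conversely, a unit $u$ satisfies $uD\cap sD=sD=usD$ for every $s$, so $u\in N_D(D\setminus\{0\})$.
\end{itemize}

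Hence $N_D(N_D(S))=U(D)=S$. Conversely, if $S=U(D)$ (the first equality in (5) suffices), then (2) holds. This closes the cycle.

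There is no real obstacle here. The only point requiring care is identifying $N_D(D\setminus\{0\})$ with $U(D)$, which follows from Proposition~\ref{prop:mcomp}(3) as above.
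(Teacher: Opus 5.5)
Your proposal is correct and follows the paper's proof: you specialize Theorem~\ref{thm:mcomp} using $\Reg(D)=D\setminus\{0\}$, then identify $N_D(D\setminus\{0\})$ with $U(D)$ to conclude $N_D(N_D(S))=U(D)=S$. The differences are cosmetic. You obtain $N_D(D\setminus\{0\})\subseteq U(D)$ by citing Proposition~\ref{prop:mcomp}(3), where the paper redoes that computation ($xD=xD\cap xD=x^2D$) directly, and you read off $N_D(S)=D\setminus\{0\}$ from condition (4) instead of computing $N_D(U(D))$ by hand.
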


\begin{proof}
Since $\Reg(D)=D \setminus \{0\}$, the first four assertions are exactly those of Theorem~\ref{thm:mcomp}.
If $S$ is saturated, then Theorem~\ref{thm:mcomp} gives $S=U(D)$.
For a unit $u$ and any nonzero $x \in D$,
\[
xD \cap uD=xD=xuD,
\]
so $N_D(U(D))=D \setminus \{0\}$.
Conversely, if $x \in N_D(D \setminus \{0\})$, then applying the defining equality to $s=x$ gives
\[
xD=xD \cap xD=x^2D,
\]
so $x$ is a unit.
Hence
\[
N_D(D \setminus \{0\})=U(D),
\]
and therefore
\[
N_D(N_D(S))=U(D)=S. \qedhere
\]
\end{proof}

\begin{corollary}\label{cor:tqr}
For a ring $R$, the following are equivalent.
\begin{enumerate}
\item $\Reg(R)$ is strongly multiplicative.
\item $\Reg(R)\subseteq U(R)$.
\item $R$ is a total quotient ring.
\end{enumerate}
\end{corollary}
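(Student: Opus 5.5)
The plan is to apply Theorem~\ref{thm:mcomp} to the multiplicative set $S=\Reg(R)$. It is nonempty (it contains $1$), contained in $\Reg(R)$, and multiplicatively closed, since a product of non-zerodivisors is a non-zerodivisor. It also avoids $0$, because $R\neq 0$ means $0$ kills $1\neq 0$. So Theorem~\ref{thm:mcomp} applies, and its equivalence (1)$\Leftrightarrow$(2) says: $\Reg(R)$ is strongly multiplicative if and only if every element of $\Reg(R)$ is a unit, i.e.\ $\Reg(R)\subseteq U(R)$. This gives (1)$\Leftrightarrow$(2) at once.

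For (2)$\Leftrightarrow$(3), I use the standard definition that $R$ is a total quotient ring when every regular element is a unit. Equivalently, the canonical map $R\to T(R)=R_{\Reg(R)}$ is an isomorphism. The argument runs in two directions.

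\begin{enumerate}
\item If $\Reg(R)\subseteq U(R)$, then localizing at $\Reg(R)$ inverts only units. Since regular elements are non-zerodivisors, the map $R\to R_{\Reg(R)}$ is injective, and it is surjective because $r/s=rs^{-1}/1$.
\item Conversely, if $R\to T(R)$ is an isomorphism, then each $s\in\Reg(R)$ maps to a unit of $T(R)$, so $s$ is a unit of $R$.
\end{enumerate}

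One could also note $U(R)\subseteq\Reg(R)$, so that (2) is equivalent to $\Reg(R)=U(R)$, the saturated case (5) of Theorem~\ref{thm:mcomp}.

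No step seems a real obstacle. The only care needed is to check the hypotheses of Theorem~\ref{thm:mcomp} (nonemptiness and multiplicativity of $\Reg(R)$, and $0\notin\Reg(R)$), and to state clearly which definition of total quotient ring is being used so that (2)$\Leftrightarrow$(3) is essentially definitional.
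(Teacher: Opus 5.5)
Your proposal is correct and matches the paper's proof: apply Theorem~\ref{thm:mcomp} to $S=\Reg(R)$ to get (1)$\Leftrightarrow$(2), and note that (2) is the definition of a total quotient ring. Your checks of the hypotheses and of the equivalence with $R\to R_{\Reg(R)}$ being an isomorphism are correct but go beyond what the paper writes. The direction (2)$\Rightarrow$(1) is also immediate on its own, since then $1$ is a least element (Example~\ref{ex1}(1)); this is worth noting because the paper's proof of Theorem~\ref{thm:mcomp} never argues it explicitly.
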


\begin{proof}
Apply Theorem~\ref{thm:mcomp} to the multiplicative set $\Reg(R)$.
The condition $\Reg(R)\subseteq U(R)$ is exactly the definition of a total quotient ring.
\end{proof}

\bigskip
\noindent\textbf{Declarations}

\medskip
\noindent\textbf{Conflict of interest.} The authors declare that they have no conflict of interest.

\medskip
\noindent\textbf{Data availability.} No datasets were generated or analyzed in the preparation of this article.

\end{document}